\newcommand{\FF}{{\mathbb{F}}}
\newcommand{\QQ}{{\mathbb{Q}}}
\newcommand{\ZZ}{{\mathbb{Z}}}
\newcommand{\CC}{{\mathbb{C}}}
\newcommand{\fS}{{\mathfrak{S}}}
\newcommand{\bI}{{\mathbf{I}}}
\newcommand{\cH}{{\mathcal{H}}}
\newcommand{\Ind}{{\operatorname{Ind}}}
\newcommand{\Irr}{{\operatorname{Irr}}}
\newcommand{\Uch}{{\operatorname{Uch}}}
\newcommand{\id}{{\operatorname{id}}}
\newcommand{\CF}{{\operatorname{CF}}}
\newcommand{\Fr}{{\operatorname{Fr}}}
\newcommand{\Aut}{{\operatorname{Aut}}}
\newcommand{\GL}{{\operatorname{GL}}}
\newcommand{\GU}{{\operatorname{GU}}}
\newcommand{\tw}[1]{{}^#1\!}
\newcommand{\thi}{{\tilde\phi}}
\newcommand{\teps}{\tilde\epsilon}
\let\al=\alpha
\let\bt=\beta
\let\la=\lambda
\let\sg=\sigma
\let\eps=\epsilon
\newtheorem{thm}{Theorem}[section]
\newtheorem{lem}[thm]{Lemma}
\newtheorem{cor}[thm]{Corollary}
\newtheorem{prop}[thm]{Proposition}
\theoremstyle{definition}
\newtheorem{exmp}[thm]{Example}
\theoremstyle{remark}
\newtheorem{rem}[thm]{Remark}
\begin{document}

\title{Frobenius--Schur indicators of unipotent characters and the
twisted involution module}

\date{\today}

\author{Meinolf Geck}
\address{Institute of Mathematics, Aberdeen University,
  Aberdeen AB24 3UE, Scotland, UK.}
\email{m.geck@abdn.ac.uk}
\author{Gunter Malle}
\address{FB Mathematik, TU Kaiserslautern, Postfach 3049,
         67653 Kaisers\-lautern, Germany.}
\email{malle@mathematik.uni-kl.de}

\subjclass[2000]{Primary 20C15; Secondary 20C33}

\begin{abstract}
Let $W$ be a finite Weyl group and $\sg$ be a non-trivial graph automorphism
of $W$. We show a remarkable relation between the $\sg$-twisted involution
module for $W$ and the Frobenius--Schur indicators of the unipotent 
characters of a corresponding twisted finite group of Lie type. This 
extends earlier results of Lusztig--Vogan for the untwisted case and 
then allows us to state a general result valid for any finite group of 
Lie type. Inspired by recent work of Marberg, we also formally define 
Frobenius--Schur indicators for ``unipotent characters'' of twisted 
dihedral groups.
\end{abstract}

\maketitle

\pagestyle{myheadings}
\markboth{Geck--Malle}{Frobenius--Schur indicators and the twisted involution
module}

\section{Introduction} \label{sec:intro}

Let $G$ be a connected reductive algebraic group over $\overline{\FF}_p$
(where $p$ is a prime) and $F \colon G \rightarrow G$ be an endomorphism
such that some power of $F$ defines a rational structure on $G$ relative
to a finite subfield of $\overline{\FF}_p$. Then the fixed point set
$G^F$ is a finite group of Lie type. Let $W$ be the Weyl group of $G$ 
and $S$ be a corresponding set of simple reflections, defined with respect 
to an $F$-stable maximal torus contained in an $F$-stable Borel subgroup of
$G$. Let $\Uch(G^F)$ be the set of unipotent characters, as defined by 
Deligne and Lusztig \cite{DeLu}. An irreducible character $\chi$ of $G^F$ is
unipotent if and only if $\chi$ appears with non-zero multiplicity in some 
Deligne--Lusztig virtual character $R_{T_w,1}$ where $T_w$ is an $F$-stable 
maximal torus ``of type $w$'' and $1$ stands for the trivial character of 
$T_w^F$. 

Assuming that $G$ is of split type, Lusztig and Vogan \cite{LV} have 
established a connection between the Frobenius--Schur indicators of the 
unipotent characters of $G^F$ and a certain involution module for $W$
which appeared in the work of Kottwitz \cite{Ko}. More precisely, let 
$\CF_0(G^F)$ be the space of all \emph{uniform} unipotent class functions on
$G^F$, that is, the subspace of the space of class functions on $G^F$ which is 
spanned by all the virtual characters $R_{T_w,1}$. For any class function
$\psi$ on $G^F$, denote by $\psi_0$ the orthogonal projection onto the
subspace $\CF_0(G^F)$. Then, by \cite[6.4]{LV}, we have the following
remarkable identity (assuming that $G$ is of split type):
\[ \bigl(\sum_{\chi \in \Uch(G^F)} \nu(\chi)\, \chi\bigr)_0=\frac{1}{|W|} 
   \sum_{w \in W} \rho(w) R_{T_w,1};\]
here, $\nu(\chi)$ denotes the Frobenius--Schur indicator of $\chi$ and
$\rho$ is the character of Kottwitz' involution module of $W$.
In this paper, we extend this to the situation where $W$ is not 
necessarily of split type. Then $F$ induces a non-trivial automorphism 
$\sg \colon W \rightarrow W$ such that $\sg(S)=S$. We shall consider a 
canonical extension of the involution module to the semidirect product 
$\tilde{W}=W \rtimes\langle\sg \rangle$; let $\tilde{\rho}$ be the 
character of this extended module. Then we shall show that the above 
identity remains true where, on the right hand side, the term $\rho(w)$ 
needs to be replaced by $\tilde{\rho}(w\sg)$. With this modification, we 
actually obtain a statement valid for any $G^F$; see Corollary~\ref{cor1}. 

The proof of this result proceeds as in \cite{LV}, by reformulating
the desired identity in terms of Lusztig's Fourier transform 
\cite[Chap.~4]{LuB} (where we use the Fourier matrices in \cite{GM}
for the Suzuki and Ree groups). Our main result, Theorem~\ref{thm:main}, 
shows that as in the split case, the multiplicities in the decomposition of 
$\tilde{\rho}$ as a class function on the coset $W.\sg$ are obtained via 
Fourier transform from the Frobenius--Schur indicators of the unipotent 
characters of $G^F$. We prove Theorem~\ref{thm:main} by explicitly 
decomposing $\tilde{\rho}$ in all relevant cases (see 
Section~\ref{sec:andn} for classical types and Section~\ref{sec:exc} 
for exceptional types) and then comparing with the Fourier transform 
of the vector of Frobenius--Schur indicators in Section~\ref{sec:FS}. 
The main difficulty is of a somewhat technical nature: in the twisted case, 
both the Fourier matrices and the formulae for the decomposition of 
$\tilde{\rho}$ essentially rely on choices of extensions of $\sg$-stable 
irreducible characters of $W$ and, a priori, there is no reason why the 
choices on both sides should fit together. We shall see that the formulae
for the decomposition of $\tilde{\rho}$ become particularly simple when we
choose Lusztig's \emph{preferred extensions}, as defined in \cite[17.2]{cs4}.
Note also that in the split case all Frobenius--Schur indicators of unipotent
characters are $0$ or $1$, which is no longer the case in our setting. 
Already in type $\tw2A_n$, it is quite remarkable how the multiplicities in 
the decomposition of $\tilde{\rho}$ match the distribution of the $\pm 1$
values of the Frobenius--Schur indicators of the unipotent characters
of $\GU_n(q)$.

Finally, the construction of $\tilde{\rho}$ also works for the case where $W$
is any dihedral group and $\sg\neq \id$. Thus, inspired by Marberg
\cite{Mar}, our computations also allow us to formally define 
Frobenius--Schur indicators for ``unipotent characters'' of twisted 
dihedral groups; see Theorem~\ref{thm:cox}.

\section{The extended involution module } \label{sec:invmod}

Let $(W,S)$ be a finite Coxeter group, with distinguished set of generators
$S$. We write $\bI:=\{w\in W\mid w^2=1\}$ for the subset of elements of order
at most~2. Let $\cH(W)$ be the one-parameter Iwahori--Hecke algebra over
$\QQ[v^{\pm1}]$ attached to $W$ with parameter $v^2$, with standard basis 
$\{T_w\mid w\in W\}$. (This basis is normalised such that the quadratic 
relations read $(T_s-v^2)(T_s+1)=0$ for $s \in S$.) In \cite{Lu12}, Lusztig 
shows that an action of $\cH(W)$ on a free $\QQ[v^{\pm 1}]$-module with 
basis $\{a_w\mid w\in \bI\}$ can be defined by the following formulae:
$$T_s.a_w=\begin{cases} v\,a_w+(v+1)a_{sw}& \text{if }sw=ws,\ l(ws)>l(w),\\
     (v^2-v-1)a_w+(v^2-v)a_{sw}& \text{if }sw=ws,\ l(ws)<l(w),\\
     a_{sws}& \text{if }sw\ne ws,\ l(ws)>l(w),\\
     (v^2-1)a_w+v^2\,a_{sws}& \text{if }sw\ne ws,\ l(ws)<l(w),
     \end{cases}$$
for $s\in S$, $w\in \bI$. We write $R_v$ for this representation of
$\cH(W)$. As in \cite[6.3]{LV}, this representation induces a representation
$R$ of $W$ on the $\QQ$-vector space $V=\bigoplus_{w\in \bI}\QQ a_w$,
defined by
$$R(s).a_w:=\begin{cases} -a_w& \text{if }sw=ws,\ l(ws)<l(w),\\
     a_{sws}& \text{else}. \end{cases}$$
Let $C$ be a conjugacy class of $W$ contained in $\bI$. Then it is
clear that the subspace $V_C:=\langle a_w \mid w \in C\rangle\subseteq
V$ is a submodule; furthermore, we have a direct sum decomposition
$V=\bigoplus_C V_C$ where $C$ runs over the conjugacy classes of
$W$ contained in $\bI$. Now let us fix such a conjugacy class $C$. Then
an alternative description of $V_C$ is given as follows.

By \cite[3.2.10]{GePf}, $C$ contains an element of the form $w_I$
where $I \subseteq S$ and $w_I$ is the longest element in the parabolic
subgroup $W_I\subseteq W$; furthermore, $w_I$ is central in $W_I$.
By \cite[2.1.15]{GePf}, we have a semidirect product decomposition
$N_W(W_I)=Y\ltimes W_I$ where $Y$ is a group consisting of certain
distinguished left coset representatives of $W_I$ in $W$ such that
$I$ is invariant under conjugation with all $y \in Y$; in particular,
we have $l(yw')=l(y)+l(w')$ for all $y \in Y$ and $w' \in W_I$. Let
$\varepsilon_I \colon N_W(W_I) \rightarrow \{\pm 1\}$ be the trivial
extension of the sign representation of $W_I$, that is, we have
\[ \varepsilon_I(yw')=(-1)^{l(w')} \qquad \mbox{for all $y \in Y$ and
   $w' \in W_I$}.\]

\begin{lem}   \label{luvo1}
 In the above setting, we have
  \[ N_W(W_I)=C_W(w_I) \qquad \mbox{and}\qquad V_C\cong
  \Ind_{C_W(w_I)}^W(\varepsilon_I).\]
\end{lem}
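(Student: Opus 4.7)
\emph{Plan.} I would treat the two assertions separately. For $N_W(W_I) = C_W(w_I)$, the inclusion from left to right is immediate from the semidirect-product structure $N_W(W_I) = Y \ltimes W_I$: centrality of $w_I$ in $W_I$ gives $W_I \subseteq C_W(w_I)$, and for each $y \in Y$ conjugation by $y$ permutes $I$, hence permutes $W_I$, and therefore sends its unique longest element $w_I$ to itself. For the reverse inclusion I would pass to the geometric reflection representation on a Euclidean space $E$: centrality of $w_I$ in $W_I$ forces $w_I$ to act as $-\Id$ on the parabolic subspace $V_I := \operatorname{span}\{\alpha_s : s \in I\}$ and as $+\Id$ on $V_I^\perp$, so $V_I$ coincides with the $(-1)$-eigenspace of $w_I$. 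Any $g \in C_W(w_I)$ preserves this eigenspace, hence permutes the roots of $W$ lying in $V_I$, which by the parabolic subsystem theorem are precisely the roots of the subsystem attached to $W_I$; therefore $g \in N_W(W_I)$.

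For the module isomorphism I would first observe that the four cases in the definition of $R$ amount to the uniform rule $R(s)\,a_w = \pm a_{sws}$, with the minus sign occurring exactly when $sw = ws$ and $l(ws) < l(w)$. Hence $R$ is a signed permutation representation on $C$, the underlying set-theoretic action being conjugation, which is transitive with point stabiliser $C_W(w_I)$. Consequently $\QQ\,a_{w_I}$ is a $C_W(w_I)$-stable line, $C_W(w_I)$ acts on it through some linear character $\chi \colon C_W(w_I) \to \{\pm 1\}$, and $V_C \cong \Ind_{C_W(w_I)}^W(\chi)$. Because of the decomposition $C_W(w_I) = Y \ltimes W_I$ just established, identifying $\chi$ with $\varepsilon_I$ reduces to showing that $\chi|_{W_I}$ is the sign character and $\chi|_Y$ is trivial.

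The first reduction is immediate: for $s \in I$, centrality gives $sw_I = w_Is$, and $l(sw_I) < l(w_I)$ because $w_I$ is the longest element of $W_I$, so the minus-sign clause of $R$ applies and $R(s)\,a_{w_I} = -a_{w_I}$. The second reduction, $R(y)\,a_{w_I} = a_{w_I}$ for every $y \in Y$, is the main obstacle. Here I would fix a reduced expression $y = s_{i_1} \cdots s_{i_k}$, set $w_j := (s_{i_{j+1}} \cdots s_{i_k})\, w_I\, (s_{i_k} \cdots s_{i_{j+1}}) \in C$ (so that $w_k = w_I$ and, since $y$ centralises $w_I$, also $w_0 = w_I$), and analyse the signs $\epsilon_j \in \{\pm 1\}$ defined by $R(s_{i_j})\,a_{w_j} = \epsilon_j\,a_{w_{j-1}}$. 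The key combinatorial input is the distinguished-coset-representative property $l(y w_I) = l(y) + l(w_I)$, which forces every suffix $s_{i_j} s_{i_{j+1}} \cdots s_{i_k}\, w_I$ to be reduced in $W$; translating this into length comparisons between $s_{i_j} w_j$ and $w_j$ excludes the minus-sign clause at every step and yields $\epsilon_j = +1$ throughout. Thus $\chi(y) = \prod_j \epsilon_j = +1$, completing the identification $\chi = \varepsilon_I$ and hence the proof.
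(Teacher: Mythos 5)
Your treatment of the group-theoretic half and of the structural reduction is sound: proving $N_W(W_I)=C_W(w_I)$ via the observation that centrality of $w_I$ forces $w_I(\alpha)=-\alpha$ for all $\alpha\in\Phi_I$, so that $\operatorname{span}(\Phi_I)$ is the $(-1)$-eigenspace and any centralizing element permutes $\Phi=\Phi\cap\operatorname{span}(\Phi_I)\cup(\Phi\setminus\Phi_I)$ appropriately, is a correct geometric alternative to the paper's combinatorial argument with distinguished coset representatives and $xW_Ix^{-1}\cap W_I=W_J$; and the signed-permutation description of $R$, giving $V_C\cong\Ind_{C_W(w_I)}^W(\chi)$ with $\chi|_{W_I}$ the sign character, is exactly the paper's reduction to the statement $(yw').a_{w_I}=(-1)^{l(w')}a_{w_I}$.

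The gap is in the step you yourself flag as the main obstacle, $\chi|_Y=1$. You assert that reducedness of every suffix $s_{i_j}\cdots s_{i_k}w_I$, ``translated into length comparisons between $s_{i_j}w_j$ and $w_j$,'' excludes the minus-sign clause. No such translation exists: along your chain the length of $w_j$ can genuinely drop under multiplication by $s_{i_j}$. For instance, take $W=\fS_4$, $I=\{s_1,s_3\}$, $w_I=s_1s_3$, and $y=s_2s_1s_3s_2\in Y$ (reduced, a distinguished coset representative); the suffix $z=s_3s_2$ gives $w_j=zw_Iz^{-1}=w_0$ of length $6$, while the next letter is $s_1$ and $l(w_0s_1)=5<6$. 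At this step the minus clause fails only because $s_1$ does not commute with $w_0$, not because of any length inequality, so lengths alone cannot do the job. What is actually needed is: if $s$ commutes with $zw_Iz^{-1}$ and $l(szw_I)=l(z)+l(w_I)+1$, then $l(zw_Iz^{-1}s)>l(zw_Iz^{-1})$ — and this is precisely the nontrivial point. The paper proves it by contradiction: assuming commutation together with a length drop, the Exchange Lemma (using $l(szw_I)=l(z)+l(w_I)+1$, with Deodhar's Lemma guaranteeing the suffixes are again distinguished representatives) yields $szw_Iz^{-1}=zw_Iu$ with $l(u)<l(z)$, while commutation forces $u=z^{-1}s$, of length $l(sz)>l(z)$, a contradiction. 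Without this (or an equivalent) argument, the identity $R(y).a_{w_I}=a_{w_I}$, and hence $\chi=\varepsilon_I$, remains unproven.
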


\begin{proof}
First we show the equality $N_W(W_I)=C_W(w_I)$.
Let $w \in N_W(W_I)$. We write $w=yw'$ where $y \in Y$ and $w' \in W_I$.
Since $yIy^{-1}=I$, we have $yw_Iy^{-1}=w_I$. Since $w_I$ is central in
$W_I$, we conclude that $w=yw' \in C_W(w_I)$. Conversely, let $w \in
C_W(w_I)$. We write $w=xw'$ where $w' \in W_I$ and $x$ is some
distinguished left coset representative of $W_I$ in $W$. Since $w'$
commutes with $w_I$, we also have $xw_I=w_Ix$. Since $l(xw_I)=l(x)+l(w_I)$,
we conclude that $l(x^{-1}w_I)=l(w_Ix)=l(x)+l(w_I)$ and so both $x$ and
$x^{-1}$ are distinguished left coset representatives. Hence, by
\cite[2.1.12]{GePf}, we have $xW_Ix^{-1} \cap W_I=W_J$ where $J= xIx^{-1}
\cap I$. But $w_I=xw_Ix^{-1}$ lies in this intersection and, hence, in
$W_J$. It follows that $I=J$ and so $x \in N_W(W_I)$.

To prove the statement concerning $V_C$, it is sufficient to show that
\begin{equation*}
  (yw').a_{w_I}=(-1)^{l(w')}a_{w_I} \qquad \text{for all $y \in Y$ and
  $w'\in W_I$}.\tag{$*$}
\end{equation*}
This is seen as follows. For any $s \in I$, we have $s.a_{w_I}=-a_{w_I}$.
Consequently, we have $w'.a_w=(-1)^{l(w')}a_{w_I}$. Thus, it remains to
show that $y.a_{w_I}=a_{w_I}$ for all $y \in Y$. We shall in fact show
that $x.a_{w_I}=a_{xw_Ix^{-1}}$ where $x$ is any distinguished left coset
representative of $W_I$ in $W$. We proceed by induction on $l(x)$. If
$x=1$, the assertion is clear. Now assume that $x\neq 1$ and choose
$s \in S$ such that $l(sx)<l(x)$. By Deodhar's Lemma \cite[2.1.2]{GePf},
we also have that $z:=sx$ is a distinguished left coset representative.
Hence, using induction, we have $z.a_{w_I}=a_{zw_Iz^{-1}}$ and so
\[ x.a_{w_I}=s.a_{zw_Iz^{-1}}.\]
Given the formula for the action of a generator on the basis elements
of $V_C$, it now suffices to show that $s$ does not commute with
$zw_Iz^{-1}$ or that $l(zw_Iz^{-1}s)>l(zw_Iz^{-1})$. Assume, if possible,
that none of these two conditions is satisfied, that is, we have that $s$
commutes with $zw_Iz^{-1}$ and $l(szw_Iz^{-1})=l(zw_Iz^{-1}s)<l(zw_Iz^{-1})$.
Then the ``Exchange Lemma'' (see \cite[Exc.~1.6]{GePf})  and the fact that
$l(szw_I)=l(z)+l(w_I)+1$ imply that $szw_Iz^{-1}=zw_Iu$ where $l(u)<l(z)$.
Since $s$ commutes with $zw_Iz^{-1}$, we have $zw_Iz^{-1}s=szw_Iz^{-1}=
zw_Iu$ and so $z^{-1}s=u$. This would imply that $l(u)=l(z^{-1}s)=
l(sz)>l(z)$, a contradiction. Hence, the assumption was wrong and so
$x.a_{w_I}=a_{xw_Ix^{-1}}$, as required.
\end{proof}

\begin{rem}   \label{luvo1a}
It is stated in \cite[6.3]{LV} that the
$W$-module $V$ is isomorphic to the involution module constructed by
Kottwitz \cite{Ko}. Lemma~\ref{luvo1} provides an easy proof of this
statement. Indeed, assume that $W$ is realised as a subgroup generated by
reflections in $\GL(E)$, where $E$ is a finite-dimensional Euclidean
space. Let $\Phi$ be the corresponding root system and $\Phi^+$
be the set of positive roots defined by $S$. Let $t \in \bI$.
Following Kottwitz \cite{Ko}, a root $\alpha \in \Phi$ is called imaginary
with respect to $t$ if $t(\alpha)= -\alpha$. Then Kottwitz
defines a linear representation $\delta \colon C_W(t) \rightarrow
\{\pm 1\}$ such that $\delta(w)=(-1)^k$ where $k$ is the number of positive
imaginary roots which are sent to negative roots by~$w$. Now, if $t=
w_I$ (where $w_I$ is central in $W_I$ as above), then one easily sees that
the imaginary roots with respect to $w_I$ are precisely the roots in the
parabolic subsystem $\Phi_I \subseteq \Phi$ corresponding to $W_I$.
Furthermore, let $w \in C_W(w_I)$ and write $w=yw'$ where $y \in Y$
and $w'\in W_I$. Then $l(w')$ is the number of positive roots in $\Phi_I$
which are sent to negative roots by $w'$. Since $y$ sends all positive
roots in $\Phi_I$ to positive roots, we conclude that $l(w')$ is the number
of positive imaginary roots sent to negative roots by $w$. Thus, we have
$\delta=\varepsilon_I$, which yields the isomorphism between $V$ and
the involution module constructed by Kottwitz.
\end{rem}

Now let $\sg\in\Aut(W)$ be an automorphism of $W$ as a Coxeter group,
that is, $\sg$ stabilizes the set $S$ of distinguished generators. In
particular, $\sg$ preserves lengths in $W$.
Then there is a natural action of $\sg$ on the Iwahori--Hecke algebra
$\cH(W)$, and we obtain the extended Hecke algebra $\tilde\cH(W)$, the
associative $\QQ[v^{\pm1}]$-algebra generated by $\cH(W)$ together with an
additional element $T_\sg$ subject to the relations $T_\sg^d=1$ and
$T_\sg T_s =T_{\sg(s)}T_\sg$ for all $s\in S$, where $d=o(\sg)$
denotes the order of $\sg$. With this we have the following\footnote{This
result also appears in the first version of a recent preprint \cite{L12}
of Lusztig.}:

\begin{prop}   \label{prop:rho1}
 The above representation $R_v$ of $\cH(W)$ extends to a representation
 $\tilde R_v$ of $\tilde\cH(W)$ via
 $$T_\sg.a_w:=a_{\sg(w)}\qquad\text{for all }w\in \bI.$$
\end{prop}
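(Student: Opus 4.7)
The plan is to check directly that the prescribed formula $T_\sg.a_w := a_{\sg(w)}$ is compatible with the defining relations of $\tilde\cH(W)$ over $\cH(W)$, namely $T_\sg^d = 1$ and $T_\sg T_s = T_{\sg(s)} T_\sg$ for all $s \in S$. First I would note that the formula is well-defined on the module $V$, since $\sg$ is a group automorphism and so maps $\bI$ to itself: if $w^2 = 1$ then $\sg(w)^2 = \sg(w^2) = 1$. The relation $T_\sg^d = 1$ is then immediate on each basis vector, since $T_\sg^d.a_w = a_{\sg^d(w)} = a_w$ as $\sg$ has order $d$.

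The real content lies in the braid-type relation $T_\sg T_s = T_{\sg(s)} T_\sg$, which I would verify on each basis element $a_w$ with $w \in \bI$. The key observation here is that $\sg$ is length-preserving and preserves commutation relations: for any $s \in S$ and any $w \in W$ one has $l(\sg(w)) = l(w)$, $l(\sg(s)\sg(w)) = l(sw)$, and $\sg(s)\sg(w) = \sg(w)\sg(s)$ if and only if $sw = ws$. Consequently, the case of Lusztig's four-case formula that applies when computing $T_s.a_w$ is exactly the case that applies when computing $T_{\sg(s)}.a_{\sg(w)}$.

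With this parallelism in hand, the verification reduces to a routine case-by-case comparison. For instance, in the case $sw = ws$ and $l(ws) > l(w)$, one has
\[T_\sg T_s.a_w = T_\sg\bigl(v\,a_w+(v+1)a_{sw}\bigr) = v\,a_{\sg(w)}+(v+1)a_{\sg(s)\sg(w)},\]
which coincides with $T_{\sg(s)}T_\sg.a_w = T_{\sg(s)}.a_{\sg(w)}$ by Lusztig's formula applied to the pair $(\sg(s),\sg(w))$, which falls into the same first case. Analogous checks dispose of the other three cases, each reducing to the equality $\sg(sws) = \sg(s)\sg(w)\sg(s)$ and the invariance of the scalar coefficients, which do not depend on $s$ or $w$ but only on which case we are in.

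The ``hard part'' is really not hard at all: it is organising the case analysis so as to make clear that no additional compatibility needs to be checked. Since the two relations $T_\sg^d = 1$ and $T_\sg T_s = T_{\sg(s)} T_\sg$ ($s \in S$) together with the relations of $\cH(W)$ constitute a presentation of $\tilde\cH(W)$, verifying them on the spanning set $\{a_w\mid w\in \bI\}$ suffices to conclude that $R_v$ extends to the claimed representation $\tilde R_v$ of $\tilde\cH(W)$.
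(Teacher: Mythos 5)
Your proof is correct and follows essentially the same route as the paper: verify the relations $T_\sg^d=1$ and $T_\sg T_s=T_{\sg(s)}T_\sg$ on the basis $\{a_w\mid w\in\bI\}$, using that $\sg$ preserves lengths and commutation so that the same case of Lusztig's formula applies to $(s,w)$ and $(\sg(s),\sg(w))$. The paper's proof is exactly this check, carried out simultaneously for all four cases in one displayed computation.
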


\begin{proof}
It suffices to check that the additional relations of $\tilde\cH(W)$ involving
$T_\sg$ are respected. It is clear that $T_\sg^d$ acts as the identity.
Furthermore, for $s\in S$ with $\sg(s)=s'$ we have
$$T_\sg T_s.a_w=\left.\begin{cases}
     v\,a_{\sg(w)}+(v+1)a_{s'\sg(w)}& \text{if }sw=ws,\ l(ws)>l(w)\\
     (v^2-v-1)a_{\sg(w)}+(v^2-v)a_{s'\sg(w)}& \text{if }sw=ws,\ l(ws)<l(w)\\
     a_{s'\sg(w)s'}& \text{if }sw\ne ws,\ l(ws)>l(w)\\
     (v^2-1)a_{\sg(w)}+v^2\,a_{s'\sg(w)s'}& \text{if }sw\ne ws,\ l(ws)<l(w)
     \end{cases}\right\}=T_{s'}T_\sg.a_w$$
for all $w\in \bI$, since $\sg$ leaves the length function invariant.
\end{proof}

Let $\tilde{W}$ be the semidirect product of $W$ with $\langle \sg \rangle$.
Thus, $\tilde{W}=\langle W,\sg\rangle$ and, in $\tilde{W}$, we
have the identity $\sg(w)=\sg w \sg^{-1}$ for all $w \in W$. By an argument
similar to that in the proof of Proposition~\ref{prop:rho1}, we obtain:

\begin{prop}   \label{prop:rho2}
 The representation $R$ of $W$ extends to a representation
 $\tilde R$ of $\tilde{W}=\langle W,\sg\rangle$ via
 $$\tilde R(\sg).a_w:=a_{\sg(w)}\qquad\text{for all }w\in \bI.$$
\end{prop}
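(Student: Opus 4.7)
The plan is to mimic the proof of Proposition~\ref{prop:rho1}, but working directly with the group $W$ rather than with $\cH(W)$. Two points need to be checked: first, that the prescription $\tilde R(\sg).a_w := a_{\sg(w)}$ defines an element of order $d$ on $V$; second, that it conjugates $R(s)$ into $R(\sg(s))$ for every $s \in S$, which is the defining relation of the semidirect product $\tilde W = W\rtimes\langle\sg\rangle$.

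For the first point, observe that $w^2=1$ implies $\sg(w)^2=\sg(w^2)=1$, so $\sg$ permutes the index set $\bI$, and the assignment $a_w \mapsto a_{\sg(w)}$ extends $\QQ$-linearly to an automorphism of $V$. Since $\sg^d=\id$, we have $\tilde R(\sg)^d = \Id_V$.

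For the second point, set $s':=\sg(s)$ and fix $w\in\bI$. Using that $\sg$ is a group automorphism with $\sg(S)=S$ and that $\sg$ preserves length, one has the equivalences $sw=ws \iff s'\sg(w)=\sg(w)s'$ and $l(ws)\lessgtr l(w) \iff l(\sg(w)s')\lessgtr l(\sg(w))$. Applying $\tilde R(\sg)$ to the defining formula for $R(s).a_w$ gives
\[ \tilde R(\sg)R(s).a_w = \begin{cases} -a_{\sg(w)} & \text{if } sw=ws,\ l(ws)<l(w),\\ a_{\sg(sws)}=a_{s'\sg(w)s'} & \text{else,}\end{cases}\]
and applying the formula for $R(s')$ to $\tilde R(\sg).a_w=a_{\sg(w)}$ yields exactly the same expression thanks to the equivalences above. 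Hence $\tilde R(\sg)R(s) = R(s')\tilde R(\sg)$ on each basis vector, which is the required relation $\tilde R(\sg)\tilde R(s)\tilde R(\sg)^{-1} = \tilde R(\sg(s))$.

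The main obstacle, as in Proposition~\ref{prop:rho1}, is purely bookkeeping: ensuring that all four cases in the definition of $R$ transform correctly under $\sg$. Since $\sg$ is an automorphism of the Coxeter system, these checks reduce immediately to length-preservation and the fact that conjugation by $\sg$ commutes with conjugation within $W$, so no genuine difficulty arises.
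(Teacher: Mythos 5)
Your proof is correct and follows essentially the same route as the paper, which simply notes that Proposition~\ref{prop:rho2} is obtained ``by an argument similar to that in the proof of Proposition~\ref{prop:rho1}'', i.e.\ the same case-by-case check that $\sg$ permutes $\bI$, has order $d$ on $V$, and intertwines $R(s)$ with $R(\sg(s))$ because $\sg$ preserves the Coxeter system and the length function. No discrepancy to report.
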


Let us write $\rho$ for the character of the representation $R$, and similarly
$\tilde\rho$ for that of $\tilde R$. We are interested in the decomposition of
$\tilde{\rho}$ as a class function on the coset $W.\sg$. In the case that
$\sg=\id$, this decomposition was obtained by Kottwitz \cite{Ko}, Casselman
\cite{Cas} for Weyl groups and by Marberg \cite{Mar} for the
non-crystallographic Coxeter groups. Here, we discuss the case when $\sg\ne\id$.

\begin{rem}   \label{firstrem}
Assume that $C\subseteq\bI$ is a $\sg$-stable
conjugacy class of $W$. Then $V_C$ is a $\tilde W$-submodule of $V$ and
$$\tilde\rho_C:=\tilde\rho|_{V_C}
  =\Ind_{C_{\tilde W}(t)}^{\tilde W}(\teps_C)\qquad\text{for any }t\in C,$$
where $\teps_C:C_{\tilde W}(t)\rightarrow\{\pm1\}$ is the linear character
defined by $\tilde R(w).a_t=\teps_C(w)\,a_t$ for $w\in C_{\tilde W}(t)$.
On the other hand, if $C\subseteq \bI$ is a conjugacy class which is not
$\sg$-invariant, then the trace of any element in $W.\sg$ on
$\sum_{i=1}^d V_{\sg^i(C)}$ is zero. Thus, as a class function on
$W.\sg$, we have
\[ \tilde{\rho}=\sum_{C} \tilde{\rho}_C,\]
where the sum runs  over all $\sg$-stable conjugacy classes $C$ of
$W$ which are contained in $\bI$. Now let $C$ be such a conjugacy class.
By Frobenius reciprocity, we have
\[ \langle \tilde{\rho}_C,\Ind_W^{\tilde{W}}(\phi)\rangle=
   \langle \rho_C,\phi\rangle \qquad \mbox{for all $\phi \in \Irr(W)$}.\]
Assume further that $d=o(\sg)$ is a prime number (which will be the case
in all situations we shall consider). If $\phi^\sg \neq \phi$,
then $\Ind_{W}^{\tilde{W}}(\phi)$ is irreducible; otherwise,
$\Ind_{W}^{\tilde{W}}(\phi)$ is the sum of $d$ distinct extensions of
$\phi$ to $\tilde{W}$. Hence, as a class function on $W.\sg$, we have
a unique decomposition
\[ \tilde{\rho}_C=\sum_{\phi \in \Irr(W)^\sg} n_\phi
\, \tilde{\phi} \qquad (n_\phi \in {\ZZ}[\sqrt[d]{1}]),\]
where each $\tilde{\phi}$ is a fixed extension of $\phi \in \Irr(W)^\sg$
to the coset $W.\sg$. So the problem will be to fix choices for the
extensions $\thi$ and to determine the corresponding coefficients $n_\phi$.
\end{rem}

In order to deal with the various possibilities for $W,\sg$ where $W$ is
irreducible, the following general result will be useful. Assume that
$\sg\ne\id$ acts as an inner automorphism of $W$. Since the only non-trivial
element of $W$ fixing $S$ is the longest element $w_0$ of $W$, this implies
that $\sg$ acts by conjugation with $w_0$. By the classification of finite
Coxeter groups, $W$ is of type $A_n$, $D_{2n+1}$, $I_2(2m+1)$ or $E_6$.
Note that in all of these groups, the centralizer of $\sg$ contains a Sylow
$2$-subgroup of $W$, so fixes some element in any conjugacy class
$C\subseteq \bI$. Note also that, in these cases, the element $w_0\sg$ 
lies in the center of $\tilde{W}$ and so $\thi(w_0\sg)=\pm \phi(1)$ for
every extension $\thi$ of $\phi \in \Irr(W)^\sg$.

\begin{prop}   \label{prop:inn}
 Assume that $\sg$ acts on $W$ by conjugation with $w_0$. Let $C\subseteq
 \bI$ be a $\sg$-stable conjugacy class. Then, as a class function on
 $W.\sg$, we have
 \[ \tilde{\rho}_C=\sum_{\phi \in \Irr(W)} \langle \rho_C,\phi\rangle\,
 \thi,\]
 where $\thi$ denotes the (unique) extension of $\phi$ to
 $W.\sg$ such that $(-1)^{l(t)}\,\thi(w_0\sg)>0$, for any $t\in C$.
 In particular, $\phi(w_0)$ has the same sign for all constituents $\phi$
 of $\rho_C$ with $\phi(w_0)\ne0$.
\end{prop}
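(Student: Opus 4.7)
The plan is to reduce the proposition to a single structural claim: the central element $w_0\sg \in Z(\tilde W)$ acts on the whole $\tilde W$-module $V_C$ as multiplication by the scalar $(-1)^{l(t)}$ (this is well-defined because $l(t)\bmod 2$ is constant on $C$, being the value of the sign character). Given this, the decomposition formula is automatic: $\sg$ is inner, so every $\phi \in \Irr(W)$ is $\sg$-stable; combined with $(w_0\sg)^2 = 1$ and $w_0\sg \in Z(\tilde W)$, the two extensions of $\phi$ to $\tilde W$ are distinguished by their value $\pm\phi(1)$ on $w_0\sg$. Only the extension with $\thi(w_0\sg) = (-1)^{l(t)}\phi(1)$ — the preferred one — can then occur as a $\tilde W$-constituent of $V_C$, necessarily with the same multiplicity $\langle \rho_C, \phi\rangle$ as in the $W$-decomposition.

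To verify the scalar claim, I pick $t \in C$ with $\sg(t) = t$, available by the Sylow-$2$ observation preceding the proposition. Then $(w_0\sg).a_t = w_0.a_{\sg(t)} = w_0.a_t$, and since $w_0 \in C_W(t)$, this action is multiplication by Kottwitz's sign $\delta(w_0) = (-1)^k$ (see Remark~\ref{luvo1a}), with $k$ the number of positive imaginary roots (w.r.t.\ $t$) sent to negative roots by $w_0$. As $w_0$ reverses every positive root, $k$ equals the total count of positive imaginary roots with respect to $t$. A short parity check then gives $l(t) \equiv k \pmod 2$: the positive roots sent to $-\Phi^+$ by $t$ split into the positive imaginary ones (counted singly) and pairs $\{\al,\bt\}$ with $t\al = -\bt,\ t\bt = -\al$ (counted in twos). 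Hence $w_0.a_t = (-1)^{l(t)} a_t$. Since $a_t$ generates $V_C$ as a $W$-module (iterating the simple-reflection formulas gives $a_u = \pm R(w).a_t$ whenever $u = wtw^{-1}$) and $w_0\sg$ commutes with every $R(w)$, the scalar extends to all of $V_C$.

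For the ``in particular'' part, taking traces in the resulting identity $R(w_0) = (-1)^{l(t)} R(\sg)$ on $V_C$, restricted to any $\phi$-isotypic component, yields $\phi(w_0) = (-1)^{l(t)}\thi(\sg)$ for the preferred extension $\thi$. Hence the sign of any nonzero $\phi(w_0)$ is $(-1)^{l(t)}\operatorname{sgn}(\thi(\sg))$, and the ``same sign'' assertion reduces to uniformity of $\operatorname{sgn}(\thi(\sg))$ across the constituents of $\rho_C$. The main subtlety I anticipate is justifying this uniformity; the natural route is to exploit that $R(\sg)$ is an honest permutation (with no signs) of the basis $\{a_u : u \in C\}$, so a careful bookkeeping of its trace on each $\phi$-isotypic block should force $\thi(\sg) \geq 0$ and yield the common sign $(-1)^{l(t)}$.
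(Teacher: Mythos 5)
Your derivation of the displayed decomposition is correct and follows essentially the paper's own route: the central element $w_0\sg$ acts on $V_C$ by the scalar $(-1)^{l(t)}$ (the paper computes this from the explicit root-theoretic formula for the linear character $\eps_C$, you from Kottwitz's $\delta$ at a $\sg$-fixed $t\in C$ plus the parity count $l(t)\equiv k \pmod 2$ -- same substance), and since the two extensions of each $\phi$ are separated by their value $\pm\phi(1)$ on $w_0\sg$, only the stated extension can occur, with multiplicity $\langle\rho_C,\phi\rangle$. (Two small cosmetic points: invoking Kottwitz's $\delta$ at a general $\sg$-fixed $t$, rather than at $t=w_I$, implicitly uses the conjugation-equivariance of the identification in Remark~\ref{luvo1a}; and the extension singled out here is not by definition Lusztig's preferred one -- that identification is only made later, in Proposition~\ref{prop:signA}.)

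The genuine gap is in the ``in particular'' clause. You correctly reduce it to showing $\thi(\sg)\ge 0$, equivalently $(-1)^{l(t)}\phi(w_0)\ge 0$, for every constituent, but the route you sketch -- that $R(\sg)$ permutes the basis $\{a_u : u\in C\}$, so ``careful bookkeeping'' of its trace on each $\phi$-isotypic block should force nonnegativity -- does not work as stated: a permutation matrix can perfectly well have negative trace on an invariant subspace (a $2$-cycle acts by $-1$ on the line spanned by $a_u-a_{\sg(u)}$), and the isotypic components of $V_C$ are not spanned by subsets of the permuted basis. Nonnegativity of the trace is automatic only on the full module $V_C$; this is what the paper exploits, obtaining from $\tilde R_C(w_0)=(-1)^{l(t)}\tilde R_C(\sg)^{-1}$ that $\rho_C(w_0)=(-1)^{l(t)}\,\#\{u\in C:\sg(u)=u\}$, i.e.\ the sign statement for the aggregate value $\rho_C(w_0)$ (the paper's own passage from this to the individual constituents is itself stated very briefly). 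As written, your proposal therefore does not prove the per-constituent sign claim; closing it requires some genuinely constituent-wise input (for example, known sign information about $\phi(w_0)$ for the constituents of Kottwitz's modules, matched with the parity of $l(t)$ as in the type $A$ computation of Proposition~\ref{prop:signA}), not just the observation that $R(\sg)$ is a permutation.
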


\begin{proof}
By assumption, $w_0\sg$ lies in the center of $\tilde W$, so it acts by the
scalar $\eps_C(w_0\sg)$ in the representation $\tilde R_C$. But $w_0$ sends
all positive roots to negative ones while $\sg$ stabilizes the set of positive
roots, so by the explicit formula for $\eps_C$ above we have that
$\eps_C(w_0\sg)=(-1)^{l(t)}$. In particular,
$\psi(w_0\sg)=(-1)^{l(t)}\psi(1)\ne0$ for any constituent $\psi\in
\Irr(\tilde{W})$ of $\tilde\rho_C$, whence $\langle \rho_C,\phi\rangle=
\langle \tilde\rho_C,\thi\rangle$ for the extension $\thi$ of $\phi$ given
in the statement. Since $\tilde R_C(\sg)$ is
a permutation matrix, it is then clear that $\tilde\rho_C(w_0)=\rho_C(w_0)$
has the same sign as $\tilde\rho_C(w_0\sg)$. Thus for all constituents $\phi$
of $\rho_C$ with $\phi(w_0)\ne0$, $\phi(w_0)$ has the same sign $(-1)^{l(t)}$.
\end{proof}

\begin{rem}   \label{rempref}
Let $\phi\in \Irr(W)$ be such that $\phi^\sg= \phi$. In \cite[17.2]{cs4}, 
Lusztig has defined the notion of a \emph{preferred extension} $\thi$ of 
$\phi$ to $\tilde W$. In the cases where $W$ is an irreducible Weyl group
and $\sg$ is ``ordinary'' in the sense of \cite[3.1]{LuB} (that is, 
whenever $s\neq s'$ in $S$ are in the same $\sg$-orbit, then the product 
$ss'$ has order $2$ or $3$), these are given as follows.
\begin{itemize}
\item If $\sg=\id$, then $\thi=\phi$.
\item If $\sg$ acts by conjugation with $w_0$ and $W$ is of type $A_n$ 
($n \geq 2$) or $E_6$, then  $\thi$ is the unique extension such that
$\thi(w_0\sg)=(-1)^{a_\phi}\phi(1)$ where $a_\phi$ is the invariant 
defined in \cite[4.1]{LuB} (in terms of the generic degree polynomial
associated with $\phi$).
\item If $\sg$ has order $3$ and $W$ is of $D_4$, then $\thi$ is the
unique extension which is defined over $\QQ$ (see \cite[3.2]{LuB}).
\item If $\sg$ has order $2$ and $W$ is of type $D_n$ ($n \geq 4$), then
$\thi$ is defined as follows. In this case, $\tilde{W}$ can be identified
with a Weyl group of type $B_n$ and, as in \cite[4.18]{LuB}, the 
irreducible characters of $\tilde{W}$ which remain irreducible upon
restriction to $W$ are parametrised by certain symbols with two rows 
of equal length (an upper row and a lower row). Then $\thi \in 
\Irr(\tilde{W})$ is the {\em preferred extension} of $\phi$ if the
corresponding symbol has the following property: the smallest entry which
appears in only one row appears in the lower row.
\end{itemize}
\end{rem}

\section{Types $A_n$ and $D_n$} \label{sec:andn}

In this and the following section we explicitly determine the
decomposition of $\tilde{\rho}$ (as a class function on $W.\sg$, see
Remark~\ref{firstrem}), in all cases where $W$ is irreducible and
$\sg \neq \id$.

\subsection{} We begin with the case where $W=\fS_n$. Here, the irreducible
characters
of $W$ are labelled by partitions $\al\vdash n$, and we write $\phi_\al$
for the corresponding character. The non-trivial $\sg$ is given by
conjugation with the longest element $w_0$.

\begin{prop}   \label{prop:signA}
 Let $W=\fS_n$ and $\sg$ the graph automorphism of order~$2$. As a
 class function on $W.\sg$, we have
 $$\tilde\rho=\sum_{\al\vdash n}\thi_\al,$$
 where $\thi_\al$ denotes Lusztig's preferred extension as in 
 Remark~\ref{rempref}.
\end{prop}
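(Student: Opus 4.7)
The plan is to apply Proposition~\ref{prop:inn}, since $\sg$ acts on $W=\fS_n$ by conjugation with the longest element $w_0$. Because $\sg$ is inner, every conjugacy class of $W$ is $\sg$-stable, so Remark~\ref{firstrem} reduces matters to computing $\tilde\rho_{C_k}$ for each class $C_k\subseteq \bI$ of involutions of cycle type $2^k 1^{n-2k}$, with $0\le k\le \lfloor n/2\rfloor$, and summing. As a representative of $C_k$ I take $w_I=s_1s_3\cdots s_{2k-1}$, with $I=\{s_1,s_3,\ldots,s_{2k-1}\}$; since $l(w_I)=k$ and the parity of $l$ is constant on conjugacy classes, we get $(-1)^{l(t)}=(-1)^k$ for every $t\in C_k$.

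Next I would decompose each $V_{C_k}$ explicitly as an $\fS_n$-module. By Lemma~\ref{luvo1}, $V_{C_k}\cong \Ind_{N}^{\fS_n}(\eps_I)$ with $N=(\fS_2\wr\fS_k)\times\fS_{n-2k}$, where $\eps_I$ is the sign character of the base group $\fS_2^k$ extended trivially to $N$. Passing to the Frobenius characteristic, this induced character equals $h_k[e_2]\cdot h_{n-2k}$. Using Littlewood's plethysm identity
$$h_k[e_2]=\sum_{\lambda\vdash 2k,\ \lambda'\text{ even}}s_\lambda$$
together with Pieri's rule, I obtain that $\langle \rho_{C_k},\phi_\al\rangle$ equals the number of partitions $\lambda\vdash 2k$ with all column lengths even for which $\al/\lambda$ is a horizontal strip of size $n-2k$.

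The main obstacle is to show that, for each $\al\vdash n$, there is a unique such pair $(k,\lambda)$. The horizontal strip condition forces $\lambda'_j\in\{\al'_j,\al'_j-1\}$, and the evenness of $\lambda'_j$ then forces $\lambda'_j=2\lfloor\al'_j/2\rfloor$; the resulting sequence is automatically a partition. Hence $\lambda$, and therefore $k$, is uniquely determined by $\al$, namely $k(\al)=\sum_j\lfloor \al'_j/2\rfloor$. In particular $\phi_\al$ occurs with multiplicity one in the involution module (which reproves $\rho=\sum_\al\phi_\al$ for $\fS_n$) and appears only in $V_{C_{k(\al)}}$.

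It remains to identify the extension. By Proposition~\ref{prop:inn} the extension of $\phi_\al$ occurring in $\tilde\rho$ is characterised by $\thi_\al(w_0\sg)=(-1)^{k(\al)}\phi_\al(1)$, whereas Lusztig's preferred extension (Remark~\ref{rempref}) satisfies $\thi_\al(w_0\sg)=(-1)^{a_\al}\phi_\al(1)$ with $a_\al=\sum_j\binom{\al'_j}{2}$. The proof is then finished by the elementary parity identity $\lfloor m/2\rfloor\equiv\binom{m}{2}\pmod 2$ for every $m\ge0$, which follows by checking $m$ modulo~$4$; summing termwise over $j$ yields $k(\al)\equiv a_\al\pmod 2$, whence the extension produced by Proposition~\ref{prop:inn} coincides with the preferred one.
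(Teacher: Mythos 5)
Your argument is correct, and its skeleton is the same as the paper's: reduce to Proposition~\ref{prop:inn} via Remark~\ref{firstrem} (all classes being $\sg$-stable since $\sg$ is inner), then identify the extension singled out there with the preferred one by matching $(-1)^{l(t)}$ against $(-1)^{a_\al}$. Where you genuinely diverge is the multiplicity statement: the paper simply quotes Kottwitz \cite[3.1]{Ko}, namely that $\phi_\al$ occurs exactly once in $\rho$, in the summand $\rho_C$ for the class $C$ with $m_\al$ fixed points ($m_\al$ the number of odd parts of $\al'$), whereas you rederive this from Lemma~\ref{luvo1} by a symmetric-function computation: Frobenius characteristic $h_k[e_2]\,h_{n-2k}$, the Littlewood identity $h_k[e_2]=\sum_{\lambda'\,\text{even}}s_\lambda$, Pieri's rule, and the column-by-column uniqueness argument forcing $\lambda_j'=2\lfloor\al_j'/2\rfloor$ and $k(\al)=\sum_j\lfloor\al_j'/2\rfloor$; since $n-2k(\al)=m_\al$, this agrees with Kottwitz's class assignment, so your proof is self-contained where the paper's is a citation, at the cost of some length. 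One small point you leave implicit is that the character $\eps_I$ of Lemma~\ref{luvo1} (sign on $W_I$, trivial on the complement $Y$ of distinguished coset representatives) really is the character of $(\fS_2\wr\fS_k)\times\fS_{n-2k}$ that is sign on the base $\fS_2^k$ and trivial on the block-permutation copy of $\fS_k$ and on $\fS_{n-2k}$; this holds because those elements send the positive roots of $\Phi_I$ to positive roots and hence lie in $Y$. Finally, your parity identity $\lfloor m/2\rfloor\equiv\binom{m}{2}\pmod 2$, summed over the columns of $\al$, is exactly the congruence the paper extracts from \cite[5.4.1]{GePf} (stated there as $2a_\al\equiv m_\al-n\pmod 4$, with $(n-m_\al)/2=k(\al)$), so the identification of the preferred extension coincides with the paper's.
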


\begin{proof}
Let $\al\vdash n$ and denote by $m_\al$ the number of odd parts of the 
conjugate partition $\al'$. By \cite[3.1]{Ko}, $\phi_\al$ is a constituent 
of $\rho_C$, where $C$ is the class of involutions of $\fS_n$ with exactly 
$m_\al$ fixed points. Now clearly there is such an involution $t$ of
length $(n-m_\al)/2$, whence $\eps_C(w_0\sg)=(-1)^{l(t)}=(-1)^{(n-m_\al)/2}$.
Thus, by Proposition~\ref{prop:inn}, we have $\tilde\rho=\sum_{\al\vdash n}
\thi_\al$ where $\thi_\al$ is the unique extension such that $\thi_\al(w_0
\sg)=(-1)^{(n-m_\al)/2}\phi_\al(1)$. Finally, let $\al \vdash n$ and denote 
by $(\al_1', \ldots,\al_r')$ the parts of the conjugate partition $\al'$. 
Then by \cite[5.4.1]{GePf} we have
\[2a_\chi\equiv \sum_{1\leq i \leq r}\al_i'(\al_i'-1)=\sum_{1 \leq i \leq r}
\al_i'^2-n\equiv m_\al-n\pmod4. \]
This shows that $(n-m_\al)/2 \equiv a_\al \pmod 4$ and so the extension
$\thi$ chosen above is Lusztig's preferred extension. 
\end{proof}

\subsection{}
For the remainder of this section, let $W$ be of type $D_n$ ($n \geq 2$)
and $\sg$ a graph automorphism of order~$2$. In this case, the notation can
be arranged such that $\tilde{W}=\langle W,\sg \rangle$ is a Coxeter group 
of type $B_n$ with generating set $S=\{\sigma,s_1, \ldots,s_{n-1}\}$ and 
diagram:
\begin{center}
\begin{picture}(250,22)
\put(  0,  3){$B_n$}
\put( 40,  5){\circle{10}}
\put( 44,  2){\line(1,0){33}}
\put( 44,  8){\line(1,0){33}}
\put( 81, 5){\circle{10}}
\put( 86, 5){\line(1,0){29}}
\put(120, 5){\circle{10}}
\put(125, 5){\line(1,0){20}}
\put(155, 2){$\cdot$}
\put(165, 2){$\cdot$}
\put(175, 2){$\cdot$}
\put(185, 5){\line(1,0){20}}
\put(210, 5){\circle{10}}
\put( 38, 15){$\sg$}
\put( 77, 15){$s_1$}
\put(117, 15){$s_2$}
\put(204, 15){$s_{n{-}1}$}
\end{picture}
\end{center}
Furthermore, $s_1,\ldots,s_{n-1}$ together with $u:=\sg s_1 \sg$ are Coxeter
generators for $W$ where $u,s_1$ commute with each other. As in
\cite[\S 5.5]{GePf}, we have a labelling of the irreducible characters
of $\tilde{W}$ by pairs of partitions $(\al,\bt)$ such that $|\al|+|\bt|=n$.
We write this as
\[ \Irr(\tilde{W})=\{\thi^{\alpha,\beta} \mid (\al,\bt) \vdash n\};\]
for example, the pair $((n),-)$ labels the trivial character and
$(-,(1^n))$ labels the sign character. Let $(\al,\bt)\vdash n$ and denote
by $\phi^{\alpha,\beta}$ the restriction of $\thi^{\alpha,\beta}$ from
$\tilde{W}$ to $W$. It is well-known that $\phi^{\alpha,\beta}=
\phi^{\beta,\alpha}\in \Irr(W)$ if $\alpha \neq \beta$ and
$\phi^{\alpha,\alpha}=\phi^{\alpha,+}+ \phi^{\alpha,-}$ where
$\phi^{\alpha,\pm}\in \Irr(W)$. Furthermore, all
irreducible characters of $W$ arise in this way.

\begin{rem}   \label{bndn1}
In the above setting, let $C \subseteq W$ be a
conjugacy class of involutions which is invariant under $\sg$. Then there
exists a parabolic subgroup $\tilde{W}_I \subseteq \tilde{W}$, where
$I \subseteq \{\sg,s_1, \ldots,s_{n-1}\}$, such that the following conditions
are satisfied:
\begin{itemize}
\item[($*$)] The longest element $w_I\in \tilde{W}_I$ is central in
$\tilde{W}_I$ and we have $w_I \in C$.
\end{itemize}
This is seen as follows.
By \cite[3.2.10]{GePf}, $C$ contains an element of the form
$w_{I'}$ where $I' \subseteq \{u,s_1,\ldots,s_{n-1}\}$ and $w_{I'}$ is
the longest element in the parabolic subgroup $W_{I'}\subseteq W$;
furthermore, $w_{I'}$ is central in $W_{I'}$.  If both $u$ and $s_1$
belong to $I'$, then we certainly have $w_{I'}=w_I$ where $I\subseteq
\{\sg,s_1,\ldots,s_{n-1}\}$ is the subset obtained by replacing $u$ by
$\sg$ in $I'$. Then ($*$) holds. Otherwise, since $C$ is invariant under
$\sg$, we can assume without loss of generality that $I' \subseteq
\{s_1,s_2,\ldots,s_{n-1}\}$. Then  we can set $I=I'$. Again, ($*$) holds.

In the first case, it is automatically true that $\sg(w_I)=w_I$.
In the second case, using \cite[3.4.12]{GePf}, we can even assume that
$I' \subseteq \{s_2,s_3,\ldots,s_{n-1}\}$ and then set $I=I'$. Hence,
in both cases, $I$ can actually be chosen such that $\sg(w_I)=w_I$.
\end{rem}

\begin{rem}   \label{bndn2}
Let $C\subseteq W$ and $I \subseteq \{\sg,s_1,
\ldots,s_{n-1}\}$ be as in Remark~\ref{bndn1}. We obtain the corresponding
$W$-module $V_C$ (as constructed in Section~\ref{sec:invmod}), with
character given by
\[\rho_C=\Ind_{C_W(w_I)}^W(\eps_I)\]
where $\eps_I \colon C_W(w_I) \rightarrow \{\pm 1\}$ is a certain
linear character of $C_W(w_I)$; see Lemma~\ref{luvo1}. Let
$\tilde{\rho}_C$ be the extension of $\rho_C$ to $\tilde{W}$.  Now, as in
Section~\ref{sec:invmod}, we have $C_{\tilde{W}}(w_I)= N_{\tilde{W}}
(\tilde{W}_I)=\tilde{W}_I \rtimes \tilde{Y}$ where $\tilde{Y}$ is a group
consisting of certain distinguished left coset representatives of
$\tilde{W}_I$ in $\tilde{W}$. Let $\tilde{\eps}_I\colon C_{\tilde{W}}(w_I)
\rightarrow \{\pm 1\}$ be the linear character such that
\[ \tilde{\eps}_I(yw')=(-1)^{l(w')-l_\sg(w')} \qquad \mbox{for
   all $y \in \tilde{Y}$ and $w'\in \tilde{W}_I$},\]
where $l_\sg(w)$ denotes the number of occurrences of the generator $\sg$
in a reduced expression for $w' \in \tilde{W}$. Note that $\tilde{\eps}_I$
is an extension of the linear character $\eps_I$ of $C_W(w_I)$ which is
used in the construction of $\rho_C$. Then we have
\[ \tilde{\rho}_C=\Ind_{C_{\tilde{W}}(w_I)}^{\tilde{W}}(\tilde{\eps}_I).\]
(This is seen by an argument entirely analogous to that in
Lemma~\ref{luvo1}.)
\end{rem}

\begin{lem}   \label{main2d}
 In the above setting, we have
 $\langle\thi^{\alpha,\beta},\tilde{\rho}_C\rangle=0$ unless
 $|\alpha|>|\beta|$.
\end{lem}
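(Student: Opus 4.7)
The strategy is to apply Frobenius reciprocity to the induced description of $\tilde{\rho}_C$ from Remark~\ref{bndn2} and then invoke the Littlewood--Richardson branching rule for type $B$. By Frobenius reciprocity,
\[
\langle\thi^{\alpha,\beta},\tilde{\rho}_C\rangle
 = \langle\thi^{\alpha,\beta}|_{C_{\tilde W}(w_I)},\tilde{\eps}_I\rangle
 \;\leq\; \langle\thi^{\alpha,\beta}|_{\tilde W_I},\tilde{\eps}_I|_{\tilde W_I}\rangle,
\]
the last inequality holding because $\tilde{\eps}_I$ occurs in $\Ind^{\tilde W_I\rtimes\tilde Y}_{\tilde W_I}(\tilde{\eps}_I|_{\tilde W_I})$ and hence $\tilde{\rho}_C$ is a summand of $\Ind^{\tilde W}_{\tilde W_I}(\tilde{\eps}_I|_{\tilde W_I})$. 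It therefore suffices to show that the right-hand pairing vanishes whenever $|\alpha|\leq|\beta|$.

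The next step is to analyse $\tilde W_I$ and $\tilde{\eps}_I|_{\tilde W_I}$. By Remark~\ref{bndn1}, the longest element of $\tilde W_I$ is central, so each irreducible factor must be of type $B_k$ or $A_1=S_2$; thus $\tilde W_I\cong B_k\times (S_2)^s$ for suitable $k,s\geq 0$. The condition $w_I\in W=D_n$ translates to $l_\sg(w_I)$ being even; as the $S_2$-factors contribute nothing to $l_\sg$ and $l_\sg(w_0(B_k))=k$, this forces $k$ to be \emph{even}. From the defining formula $\tilde{\eps}_I(w')=(-1)^{l(w')-l_\sg(w')}$ one then reads off
\[
\tilde{\eps}_I|_{\tilde W_I} = \thi^{(1^k),\emptyset}_{B_k}\otimes\bigl(\phi^{(1^2)}_{S_2}\bigr)^{\otimes s},
\]
a one-dimensional character.

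The core of the proof is then the Littlewood--Richardson branching computation in stages $B_n\supseteq B_k\times S_{n-k}\supseteq\tilde W_I$. The standard branching formula for $B_n\to B_k\times S_{n-k}$ reads
\[
\thi^{\alpha,\beta}|_{B_k\times S_{n-k}}
 = \sum c^{\alpha}_{\alpha^1,\alpha^2}\,c^{\beta}_{\beta^1,\beta^2}\,c^{\mu}_{\alpha^2,\beta^2}\;\thi^{\alpha^1,\beta^1}_{B_k}\otimes\phi^{\mu}_{S_{n-k}},
\]
so that matching the $B_k$-piece with $\thi^{(1^k),\emptyset}$ forces $\alpha^1=(1^k)$, $\beta^1=\emptyset$, and hence $\beta^2=\beta$ while $\alpha/\alpha^2$ is a vertical strip of size $k$. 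A further restriction from $S_{n-k}$ to $S_2^s\times\{1\}^t$ (with $t=n-k-2s$) and pairing with the sign of each $S_2$-factor then imposes additional Littlewood--Richardson conditions on~$\mu$.

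The main obstacle is the combinatorial bookkeeping in this last step: one must combine the vertical-strip constraint on $\alpha$ (of length $k$), the sign-type restriction on the $S_2^s$-part (which forces $\mu$ to contain $s$ ``columns of height $2$'' in the appropriate sense), and the parity condition $k\in 2\ZZ$, to deduce the strict inequality $|\alpha|>|\beta|$. Informally, every sign contribution accumulates on the $\alpha$-side while $\beta$ remains untouched; making this precise should be possible by computing $\Ind^{B_n}_{\tilde W_I}(\tilde{\eps}_I|_{\tilde W_I})$ explicitly and identifying its irreducible constituents via the known character theory of $B_n$, possibly with an auxiliary induction on~$n$.
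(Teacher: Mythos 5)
Your opening reduction is exactly where the argument breaks down. The inequality $\langle\thi^{\alpha,\beta},\tilde{\rho}_C\rangle \le \langle\thi^{\alpha,\beta}|_{\tilde{W}_I},\tilde{\eps}_I|_{\tilde{W}_I}\rangle$ is correct, but the statement you then set out to prove --- that the right-hand side vanishes whenever $|\alpha|\le|\beta|$ --- is false, so the reduction has discarded precisely the information that makes the lemma true. Concretely, take $n=4$, let $C$ be the ($\sg$-stable) class of reflections of $W=D_4$ and $w_I=s_2$, so that $\tilde{W}_I=\langle s_2\rangle\cong \fS_2$ and $\tilde{\eps}_I|_{\tilde{W}_I}$ is the sign character. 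Inducing in stages $\fS_2\le\fS_4\le\tilde{W}$ gives $\Ind_{\fS_2}^{\fS_4}(\mathrm{sgn})=\phi_{(31)}+\phi_{(22)}+2\phi_{(211)}+\phi_{(1^4)}$ and $\Ind_{\fS_4}^{\tilde{W}}(\phi_\mu)=\sum_{\gamma,\delta}c^\mu_{\gamma\delta}\,\thi^{\gamma,\delta}$, whence $\langle\thi^{(1),(21)},\Ind_{\tilde{W}_I}^{\tilde{W}}(\tilde{\eps}_I|_{\tilde{W}_I})\rangle=1+1+2=4\ne0$ although $|\alpha|=1<3=|\beta|$. The failure is visible in your own heuristic: the sign characters of the $A_1$-factors involve no sign changes at all, so after induction their content spreads over \emph{both} rows of the bipartition; nothing ``accumulates on the $\alpha$-side''. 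What does force the weight onto the first row is the part of $C_{\tilde{W}}(w_I)$ you dropped, namely $\tilde{Y}$: it contains the sign changes $\sg_j$ for $j>m+2l$ and the elements $\sg_{m+2i-1}s_{m+2i-1}\sg_{m+2i-1}$, on all of which $\tilde{\eps}_I$ takes the value $+1$, and these constraints are indispensable. (Your parity observation that $m$ is even is correct but irrelevant; the decisive numerical fact is the bound $l<n/2$ on the number of $A_1$-factors.)

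The paper's proof keeps the full centralizer and encodes these constraints via Clifford theory with respect to the normal subgroup $\tilde{N}=\langle\sg_1,\ldots,\sg_n\rangle$: if $\thi^{\alpha,\beta}$ occurs in $\tilde{\rho}_C$, then by the Mackey formula some conjugate of a linear character $\eta$ of $\tilde{N}$ lying under $\thi^{\alpha,\beta}$ (taking value $-1$ on exactly $n-|\alpha|$ of the $\sg_i$) must agree with $\tilde{\eps}_I$ on $\tilde{N}\cap C_{\tilde{W}}(w_I)$; evaluating $\tilde{\eps}_I$ on the generators $\sg_1,\ldots,\sg_m$, $\sg_{m+2i-1}\sg_{m+2i}$ and $\sg_j$ ($j>m+2l$) of this intersection forces $\eta$ to be $-1$ on exactly $l$ coordinates, so $|\alpha|=n-l>n/2>|\beta|$. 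If you wish to salvage a symmetric-function computation, you must at least induce from the subgroup generated by $\tilde{W}_I$ together with $\tilde{N}\cap C_{\tilde{W}}(w_I)$, not from $\tilde{W}_I$ alone; and in any case the final Littlewood--Richardson ``bookkeeping'' in your sketch is only asserted to be possible, not carried out, while the example above shows it cannot be completed in the form you propose.
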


\begin{proof}
First recall that $\tilde{W}$ has a normal subgroup $\tilde{N}=\langle \sg_1,
\sg_2,\ldots, \sg_n \rangle$ where $\sg_1:=\sg$ and $\sg_i:=s_{i-1}\sg_{i-1}
s_{i-1}$ for $2\leq i \leq n$. Let $0\leq k\leq n$ and $\eta \colon \tilde{N}
\rightarrow \{\pm 1\}$ be a linear character which takes value $1$ on
exactly $k$ elements of $\{\sg_1,\sg_2,\ldots,\sg_n\}$ and value $-1$,
otherwise.  Then the usual construction of $\Irr(\tilde{W})$ via Clifford
theory (see \cite[5.5.4]{GePf}) shows that
\[ \big\langle \thi^{\alpha,\beta},\Ind_{\tilde{N}}^{\tilde{W}}(\eta)
   \big\rangle\neq 0 \qquad \Longleftrightarrow \qquad
   \alpha \vdash k \mbox{ and } \beta \vdash n-k.\]
Assume now that $(\alpha,\beta)\vdash n$ is such that $\langle \thi^{\alpha,
\beta},\tilde{\rho}_C\rangle \neq 0$.  Let $k=|\alpha|$ and $\eta$ be a
linear character of $\tilde{N}$ as above. Then we also have
\[ \big\langle \Ind_{\tilde{N}}^{\tilde{W}}(\eta),\tilde{\rho}_C\big\rangle
   \neq 0.\]
Now, by Remark~\ref{bndn2}, $\tilde{\rho}_C$ is obtained by inducing a
certain linear character from $C_{\tilde{W}}(w_I)$ to $\tilde{W}$. Hence,
using the Mackey formula, we conclude that there exists some $x \in
\tilde{W}$ such that $\eta^x$ and $\tilde{\eps}_I$ have the same
restriction to $\tilde{N} \cap C_{\tilde{W}}(w_I)$. Note that $\eta^x$ is
a linear character of $\tilde{N}$ similar to $\eta$, that is, it takes
value $1$ on exactly $k$ elements of $\{\sg_1,\sg_2,\ldots,\sg_n\}$ and
value $-1$, otherwise. Now let $\tilde{W}_I \subseteq \tilde{W}$ be a
parabolic subgroup such that Remark~\ref{bndn1}($*$) holds. Then
$\tilde{W}_I$ is of type $B_m \times A_1 \times \cdots \times A_1$ where
$m \geq 0$ and where the number of factors of type $A_1$ is strictly smaller
than $n/2$. Consequently, the element $w_I$ has the following form:
\[ w_I=\sg_1\sg_2\cdots \sg_m s_{m+1}s_{m+3}
   \cdots s_{m+2l-1} \quad \mbox{where} \quad 0 \leq l <n/2.\]
It follows that $\tilde{N}\cap C_{\tilde{W}}(w_I)$ is generated by the
elements
\begin{align*}
&\sg_1,\;\;\sg_2,\;\;\ldots,\;\;\sg_m,\\
&\sg_{m+1}\sg_{m+2},\;\;\sg_{m+3} \sg_{m+4},\;\; \ldots,\;\;
\sg_{m+2l-1}\sg_{m+2l},\\
&\sg_{m+2l+1},\;\;\sg_{m+2l+2},\;\;\ldots,\;\; \sg_{n}.
\end{align*}
(To see this, it may be useful to work with the usual realisation
of $\tilde{W}$ as a group of monomial matrices. In this realisation,
each $\sigma_i$ is represented by a diagonal matrix where the $i$th
diagonal entry is $-1$ and all other diagonal entries are $1$.)
By the construction of $\tilde{\eps}_I$, we have
\begin{alignat*}{2}
\tilde{\eps}_I(\sg_i)&=&\,1 & \qquad (1 \leq i \leq m),\\
\tilde{\eps}_I(\sg_{m+2i-1}\sg_{m+2i})&=&\,-1 &\qquad (1\leq i \leq l),\\
\tilde{\eps}_I(\sg_i)&=&\,1 &\qquad (i \geq m+2l+1).
\end{alignat*}
(The first and third equalities are clear. As far as the second equality
is concerned, note that $\sg_{m+2i-1}\sg_{m+2i}=s_{m+2i-1}y$ where
$s_{m+2i-1} \in I$ and $y= \sg_{m+2i-1} s_{m+2i-1}\sg_{m+2i-1} \in
\tilde{Y}$; so, by Remark~\ref{bndn2}, we have $\tilde{\eps}_I
(s_{m+2i-1})=-1$ and $\tilde{\eps}_I(y)=1$.) Since $\eta^x$ and
$\tilde{\eps}_I$ have the same restriction to $\tilde{N} \cap
C_{\tilde{W}}(w_I)$, we must have $\eta^x(\sg_i)=1$ for $1 \leq i\leq m$
and $\eta^x(\sg_i)=1$ for $i\geq m+2l+1$. Furthermore, for each $i
\in\{1,\ldots,l\}$, we must have either $\eta^x(\sg_{m+2i-1})=1$ or
$\eta^x(\sg_{m+2i})=1$. Hence, $\eta^x$ takes value $-1$ on exactly
$l$ elements in $\{\sg_1,\sg_2,\ldots,\sg_n\}$. Thus, we conclude that
$k=n-l>n/2$, as required.
\end{proof}

Recall Lusztig's notion of special characters for $W$ and of the finite
$2$-group associated to the family of a special character; see
\cite[Chap.~4]{LuB}.

\begin{prop}   \label{prop:signD2}
 Let $W=D_n$ ($n \geq 2$) and $\sg$ the graph automorphism of order~$2$,
 as above. As a class function on $W.\sg$, we have
 $$\tilde\rho=\sum_{(\al,\bt)}2^{d(\al,\bt)}\,\thi^{\al,\bt},$$
 where the sum is over all pairs $(\al,\bt)\vdash n$ such that $|\al|>|\bt|$
 and $\phi^{\al,\bt}$ is special, and $2^{d(\al,\bt)}$ is the order of the
 finite group attached to the family of $\phi^{\al,\bt}$. All the extended
 characters $\thi^{\alpha,\beta}$ appearing in the above sum are
 the preferred extensions as in Remark~\ref{rempref}.
\end{prop}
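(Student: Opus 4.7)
The plan is to combine the decomposition $\tilde\rho=\sum_C\tilde\rho_C$ of Remark~\ref{firstrem} with the induced-character description of Remark~\ref{bndn2}, then to match coefficients with Lusztig's combinatorics of symbols and families in type $B_n$. By Lemma~\ref{main2d}, only the characters $\thi^{\al,\bt}$ with $|\al|>|\bt|$ can occur, so the task reduces to determining the multiplicity $\langle\tilde\rho,\thi^{\al,\bt}\rangle$ for such pairs and recognizing the result as $2^{d(\al,\bt)}$ on the special locus.

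First I would enumerate the $\sg$-stable conjugacy classes $C\subseteq\bI$ of $W$. By Remark~\ref{bndn1}, each $C$ is represented by the longest element $w_I$ of a parabolic $\tilde W_I$ of type $B_m\times A_1^l$ with $l<n/2$, where $I\subseteq\{\sg,s_1,\ldots,s_{n-1}\}$ is further subject to the condition that $w_I\in W$ (a parity condition on $m$ when $\sg\in I$). For each such $I$, Remark~\ref{bndn2} yields
\[\tilde\rho_C=\Ind_{C_{\tilde W}(w_I)}^{\tilde W}(\tilde\eps_I),\]
and I would describe $C_{\tilde W}(w_I)=\tilde W_I\rtimes\tilde Y$ explicitly in the signed-permutation realization of $\tilde W$ as a subgroup of wreath-product type. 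By Frobenius reciprocity and the symbol parametrization of $\Irr(\tilde W)$ recalled in \cite[4.18]{LuB}, the decomposition of each $\tilde\rho_C$ into $\thi^{\al,\bt}$'s then reduces to a combinatorial count on the symbol of $(\al,\bt)$ governed by the pair $(m,l)$.

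Summing over all $C$ gives the total coefficient of $\thi^{\al,\bt}$ in $\tilde\rho$. The main technical obstacle is the verification that this total equals $2^{d(\al,\bt)}$ exactly when $\phi^{\al,\bt}$ is special, and vanishes otherwise. Concretely, this amounts to a combinatorial identity on $D_n$-symbols: the admissible $(m,l)$-data should be in bijection with the elements of Lusztig's $2$-group attached to the family of $\phi^{\al,\bt}$, with the bijection breaking down exactly outside the special locus. I expect this to follow from a careful analysis of the ``singleton'' entries of the symbol of $(\al,\bt)$ and their pairing behaviour under the $A_1^l$-factor of $\tilde W_I$; the $B_m$-factor then absorbs the remaining entries of the symbol.

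Finally, the identification of $\thi^{\al,\bt}$ with Lusztig's preferred extension of Remark~\ref{rempref} will be obtained by tracking the sign $(-1)^{l(w')-l_\sg(w')}$ defining $\tilde\eps_I$ through the induction. This sign is precisely compatible with the convention that the smallest entry of the symbol appearing in only one row lies in the lower row, so that the extension recorded by the non-vanishing multiplicity is automatically the preferred one.
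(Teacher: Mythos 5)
There is a genuine gap: the heart of the proposition --- that the multiplicity of $\thi^{\al,\bt}$ (with $|\al|>|\bt|$) equals $2^{d(\al,\bt)}$ when $\phi^{\al,\bt}$ is special and vanishes otherwise --- is never actually established in your proposal. You reduce correctly via Lemma~\ref{main2d} to extensions with $|\al|>|\bt|$, but then you plan to recompute all multiplicities from scratch by decomposing each $\Ind_{C_{\tilde W}(w_I)}^{\tilde W}(\tilde\eps_I)$ through symbol combinatorics and summing over the classes $C$, and at the crucial point you only write that you ``expect'' the admissible $(m,l)$-data to biject with Lusztig's $2$-group exactly on the special locus. That expectation is the entire content of the claim, and nothing in the proposal indicates how the order $2^{d(\al,\bt)}$ would emerge or why non-special characters drop out; carrying this out would essentially amount to reproving Kottwitz's theorem for type $D_n$. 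The paper avoids this altogether: by Frobenius reciprocity (Remark~\ref{firstrem}), $\langle\tilde\rho,\thi^{\al,\bt}\rangle+\langle\tilde\rho,\thi^{\bt,\al}\rangle=\langle\rho,\phi^{\al,\bt}\rangle$, and the right-hand side is already known by Kottwitz \cite[3.3]{Ko} to be $2^{d(\al,\bt)}$ for special characters and $0$ otherwise (for $\al\neq\bt$); Lemma~\ref{main2d} then forces the whole multiplicity onto the extension with $|\al|>|\bt|$. Your proposal never invokes this known decomposition of $\rho$, which is the key external input making the argument immediate.

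The second assertion, that the surviving extensions are Lusztig's preferred ones, is likewise only asserted: the claim that the sign $(-1)^{l(w')-l_\sg(w')}$ in $\tilde\eps_I$ is ``precisely compatible'' with the smallest-singleton-in-the-lower-row convention is not backed by any computation, and it is not how the identification goes in the paper. There one does not track signs through the induction at all; one observes from \cite[4.6]{LuB} that specialness of $\phi^{\al,\bt}$ forces the symbol entries to interlace in one of two ways, that the preferred extension corresponds to the interlacing $\mu_1\leq\la_1\leq\mu_2\leq\la_2\leq\cdots$, and that this pattern forces $|\bt|<|\al|$ --- so the extension singled out by Lemma~\ref{main2d} is automatically the preferred one. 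You would need either this purely combinatorial comparison of symbols, or a genuine computation of the sign of $\thi^{\al,\bt}$ inside your induced characters, for this part of the statement to be proved.
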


\begin{proof}
First note that, in the decomposition of $\tilde{\rho}$
as a class function on $W.\sg$, there will only be characters $\thi^{\alpha,
\beta}$ where $\alpha \neq \beta$. Now, by \cite[3.3]{Ko}, $\phi^{\al,\bt}$
($\alpha \neq \beta$) is a constituent of $\rho$ if and only if it is a
special character, and in that case, the multiplicity is given by
$2^{d(\al,\bt)}$. In combination with Lemma~\ref{main2d} (and
Remark~\ref{firstrem}), this immediately yields the above formula for
$\tilde{\rho}$.

It remains to check the statement concerning the preferred extensions.
Let $(\alpha,\beta)\vdash n$ be such that $\alpha \neq \beta$ and consider
the corresponding character $\phi^{\alpha, \beta} \in \Irr(W)$. As in
\cite[4.6]{LuB}, we have an associated symbol with two rows of equal
length
\[ \left(\begin{array}{c} \lambda_1<\lambda_2 <\ldots <\lambda_m\\
\mu_1<\mu_2 <\ldots <\mu_m\end{array}\right)\]
where the upper row is associated with $\alpha$ and the lower row is
associated with $\beta$. Recall from Remark~\ref{rempref} that 
$\thi^{\alpha,\beta}\in\Irr(\tilde{W})$ is the preferred extension 
of $\phi^{\alpha,\beta}$ if the above symbol has the following property: 
the smallest entry which appears in only one row appears in the lower row. 
(Note that $\phi^{\alpha,\beta}=\phi^{\beta,\alpha}$, so the two rows of
the symbol can be interchanged as far as $\Irr(W)$ is concerned; however, 
when we consider extensions to $\tilde{W}$, then $\thi^{\alpha,\beta}\neq 
\thi^{\beta,\alpha}$ and so the order of the two rows does matter.) Now, 
by \cite[4.6]{LuB}, the condition for $\phi^{\alpha,\beta}$ to be special 
is that either
\[ \lambda_1\leq \mu_1 \leq \lambda_2 \leq \mu_2 \leq \ldots \leq
\lambda_m \leq \mu_m\]
or
\[ \mu_1\leq \lambda_1 \leq \mu_2 \leq \lambda_2 \leq \ldots \leq
\mu_m \leq \lambda_m.\]
Hence, if $\phi^{\alpha,\beta}$ is special, then $\thi^{\alpha,\beta}$ is
the preferred  extension precisely when we are in the second case.
But in this case, we have $\sum_{1\leq i \leq m} \mu_i < \sum_{1\leq i
\leq m} \lambda_i$ which immediately implies that $|\beta|< |\alpha|$.
Thus, all the characters $\thi^{\alpha,\beta}$ appearing in the
decomposition of $\tilde{\rho}$ are preferred extensions.
\end{proof}

\begin{rem}
Assume that $n$ is odd. Then one can give a different proof based
on Proposition~\ref{prop:inn}, as follows. We need to specify certain
involutions in $W$. For this, recall that $W$ can be realized as a subgroup
of index two in the group of signed permutations of $\{1,\ldots,n\}$. For
non-negative integers $j,k,l$ with $n=2j+k+l$ let $t_{j,k,l}$ denote the
involution in $W$ which acts by the permutation $(1,2)\cdots(2j-1,2j)$ on
the first $2j$ letters, and by $-1$ on the last $l$, and write $C(j,k,l)$
for its class. Then by \cite[3.3]{Ko}, $\phi^{\al,\bt}$ occurs in
$\rho_{C(j,k,l)}$ if and only if $\al\vdash j+k+l$, $\bt\vdash j$. Now
the character $\thi^{\al,\bt}$ has sign $(-1)^{|\bt|}$ on the central
element $w_0\sg$, while the length of $t_{j,k,l}$ clearly satisfies
$l(t_{j,k,l}) \equiv j\pmod2$, so we may conclude by
Proposition~\ref{prop:inn}.
\end{rem}

\section{Exceptional types} \label{sec:exc}

We now consider the various cases of exceptional type.

\subsection{$I_2(m)$, $m$ odd, with $o(\sg)=2$}
Let $(W,S)$ be the Coxeter group of type $I_2(m)$, $m\ge3$ odd, and $\sg$ the
graph automorphism of $W$ interchanging the two generating reflections in $S$.
Then $\Irr(W)$ consists of the trivial character $\phi_{1,0}$, the
sign character $\phi_{1,m}$ and $(m-1)/2$ irreducible characters $\phi_{2,k}$,
$1\le k\le (m-1)/2$, of degree~2.

\begin{prop}   \label{prop:I2modd}
 Let $W$ be of type $I_2(m)$, $m$ odd, and $o(\sg)=2$. As a class
 function on $W.\sg$, we have
 $$\tilde\rho= \thi_{1,0}-\thi_{1,m}-\sum_{1\leq k \leq (m-1)/2}
 \thi_{2,k},$$
 where $\thi_*$ denotes the extension of $\phi_*$ to $W.\sg$ with positive
 value on $w_0\sg$.
\end{prop}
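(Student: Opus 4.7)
The plan is to apply Proposition~\ref{prop:inn}: since $m$ is odd, the graph automorphism $\sg$ coincides with conjugation by the longest element $w_0\in W$, placing us in the setting of that proposition. The first step is to identify the $\sg$-stable conjugacy classes of involutions in $W$. Because $m$ is odd, the two simple reflections are conjugate in $W$, so $\bI$ decomposes into just two classes: $C_0=\{1\}$ and the class $C_r$ of all $m$ reflections; both are automatically $\sg$-stable, so by Remark~\ref{firstrem} one has $\tilde\rho=\tilde\rho_{C_0}+\tilde\rho_{C_r}$ as a class function on $W.\sg$.

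Next I would decompose $\rho_{C_0}$ and $\rho_{C_r}$ into irreducibles of $W$. Clearly $\rho_{C_0}=\phi_{1,0}$. For $\rho_{C_r}$, take $w_I=s$ a simple reflection, so that $C_W(s)=\langle s\rangle$ has order~$2$ and $\eps_{\{s\}}$ is its sign character. Lemma~\ref{luvo1} then gives $\rho_{C_r}=\Ind_{\langle s\rangle}^W(\eps_{\{s\}})$, and Frobenius reciprocity reduces the computation to evaluating $\phi(s)$ for $\phi\in\Irr(W)$. Using $\phi_{1,0}(s)=1$, $\phi_{1,m}(s)=-1$, and $\phi_{2,k}(s)=0$ (a reflection acts by a traceless matrix in any two-dimensional irreducible representation), one obtains $\rho_{C_r}=\phi_{1,m}+\sum_{k=1}^{(m-1)/2}\phi_{2,k}$.

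Finally, I would apply Proposition~\ref{prop:inn} to each class and convert the resulting extensions to the normalisation used in the present statement. For $t\in C_0$ we have $l(t)=0$, so the proposition picks the extension with $\thi(w_0\sg)>0$, matching the $\thi_{1,0}$ of the statement. For $t\in C_r$, any reflection satisfies $(-1)^{l(t)}=-1$ (the sign character of $W$ takes value $-1$ on a reflection), so $l(t)$ is odd and the proposition picks the extension with $\thi(w_0\sg)<0$, i.e.\ the negative of the $\thi_*$ used here. Consequently every constituent coming from $C_r$ enters with a minus sign, yielding the claimed formula. The only delicate point in the argument is this sign-conversion between the two normalisations of the extensions; the remainder is a routine character computation.
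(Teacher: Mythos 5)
Your argument is correct and follows the same route as the paper: both rest on Proposition~\ref{prop:inn} together with the observation that the reflections in $C$ have odd length, which forces the extensions picked out by that proposition to be the negatives (on the coset $W.\sg$) of the extensions normalised to be positive at $w_0\sg$. The only difference is in how the decomposition of $\rho$ is obtained: the paper simply cites Marberg \cite[Prop.~3.5, Tab.~1]{Mar} for $\rho=\sum_\phi\phi$ and for the fact that all constituents except $\phi_{1,0}$ live in the submodule attached to the reflection class, whereas you derive this directly from Lemma~\ref{luvo1}, writing $\rho_{C_r}=\Ind_{\langle s\rangle}^W(\eps_{\{s\}})$ and computing multiplicities by Frobenius reciprocity from the values $\phi_{1,0}(s)=1$, $\phi_{1,m}(s)=-1$, $\phi_{2,k}(s)=0$. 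Your computation agrees with the cited tables (the dimension count $1+2\cdot\frac{m-1}{2}=m=|C_r|$ confirms it), so your version is a correct, self-contained replacement for that citation; the remaining sign bookkeeping at $w_0\sg$ is handled exactly as in the paper.
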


\begin{proof}
By \cite[Prop.~3.5]{Mar} we have
$$\rho=\sum_{\phi\in\Irr(W)}\,\phi.$$
According to \cite[Tab.~1]{Mar} all constituents except for $\phi_{1,0}$ occur
in $\rho_C$, where $C$ is the unique conjugacy class of involutions of $W$.
Thus, by Proposition~\ref{prop:inn}, $\langle\tilde\rho_C,\thi\rangle=
\langle\rho_C,\phi\rangle=1$ for all of these, with $\thi$ the extension
with $(-1)^{l(t)}\thi(w_0\sg)>0$, where $t\in C$. Since the generating
reflections lie in $C$ and have length~1, the claim follows.
\end{proof}

\subsection{$I_2(m)$, $m$ even, with $o(\sg)=2$}
Let $(W,S)$ be the Coxeter group of type $I_2(m)$, $m\ge4$ even, and $\sg$ the
exceptional graph automorphism of $W$ interchanging the two generating
reflections in $S$. Then $W$ has the trivial character $\phi_{1,0}$, the
sign character $\phi_{1,m}$, two further linear characters $\phi_{1,m/2}'$
and $\phi_{1,m/2}''$, and $m/2-1$ irreducible characters $\phi_{2,k}$. All of
these except for the two linear characters $\phi_{1,m/2}'$ and
$\phi_{1,m/2}''$ are $\sg$-invariant.

\begin{prop}   \label{prop:I2m}
 Let $W$ be of type $I_2(m)$, $m$ even, and $o(\sg)=2$. As a class function
 on $W.\sg$, we have
 $$\tilde\rho= \thi_{1,0}+\thi_{1,m},$$
 where $\thi_{1,k}$ denotes the extension of $\phi_{1,k}$ to
 $W.\sg$ with positive value on $\sg$, for $k=0,m$.
\end{prop}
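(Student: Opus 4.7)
The plan is to compute $\tilde\rho$ directly via the decomposition of Remark~\ref{firstrem}, exploiting that for $W$ of type $I_2(m)$ with $m$ even, there are only two $\sg$-stable conjugacy classes of involutions, each contributing a one-dimensional piece.

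First I would enumerate the conjugacy classes of involutions in $W=I_2(m)$. These are the four classes $\{1\}$, $\{w_0\}$, and the two classes of reflections, which, since $m$ is even, split as one class containing the generator $s$ and another containing $t$. Since $\sg$ swaps $s$ and $t$, it interchanges these two reflection classes, so by the second half of Remark~\ref{firstrem} their contribution to $\tilde\rho$, viewed as a class function on $W.\sg$, is zero. Hence $\tilde\rho = \tilde\rho_{\{1\}}+\tilde\rho_{\{w_0\}}$ on the coset $W.\sg$.

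Next I would compute each summand. For $C=\{1\}$ the module $V_C=\QQ a_1$ is one-dimensional; applying the formula for $R(s).a_1$ (we are in the case $sw=ws$ with $l(ws)>l(w)$, giving $R(s).a_1=a_{s\cdot 1\cdot s}=a_1$) shows that $W$ acts trivially, so $V_{\{1\}}$ carries $\phi_{1,0}$. Moreover $\sg(1)=1$, hence by Proposition~\ref{prop:rho2} we have $\tilde R(\sg).a_1=a_1$, so the extension takes value $+1$ on $\sg$ and equals $\thi_{1,0}$. For $C=\{w_0\}$ we again have $V_C=\QQ a_{w_0}$ one-dimensional; since $w_0$ is central in $W$, for every $s\in S$ we have $sw_0=w_0s$ with $l(w_0s)<l(w_0)$, so by the defining formula $R(s).a_{w_0}=-a_{w_0}$, and $V_{\{w_0\}}$ carries the sign character $\phi_{1,m}$. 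Finally, because $\sg$ preserves $S$ and the length function it fixes $w_0$, so $\tilde R(\sg).a_{w_0}=a_{w_0}$ and the extension has value $+1$ on $\sg$, giving $\thi_{1,m}$.

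Summing the two contributions yields $\tilde\rho=\thi_{1,0}+\thi_{1,m}$ as desired. There is no substantial obstacle here; the only point requiring a moment of care is the claim that the two reflection classes are genuinely distinct (hence $\sg$-unstable as orbits), which is a standard fact about dihedral groups of even order and is needed to justify discarding the reflection contributions via Remark~\ref{firstrem}.
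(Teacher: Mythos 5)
Your proof is correct, but it takes a different and in fact more self-contained route than the paper. The paper quotes Marberg's explicit decomposition of $\rho$ (his Prop.~3.5, split into the cases $m\equiv 2$ and $m\equiv 0 \pmod 4$) together with his Table~1 locating the constituents in the various submodules $V_C$, and then argues that the pair $\phi_{1,m/2}',\phi_{1,m/2}''$ and the doubled constituents $\phi_{2,2k-1}$ sitting in the two swapped reflection-class submodules contribute nothing on the coset $W.\sg$. You instead bypass Marberg entirely: you apply Remark~\ref{firstrem} at the level of conjugacy classes, observe that for $m$ even the only $\sg$-stable classes in $\bI$ are $\{1\}$ and $\{w_0\}$ (the two reflection classes being distinct and interchanged by $\sg$, since $\sg(s)=t$ and $s\not\sim t$ for $m$ even), and then compute the two surviving one-dimensional modules $V_{\{1\}}$ and $V_{\{w_0\}}$ directly from the defining formulas for $R$ and $\tilde R(\sg)$. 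This has two small advantages: it needs no case split on $m \bmod 4$ and no external tables, and it makes explicit the verification that the extensions occurring are those with value $+1$ on $\sg$ (via $\tilde R(\sg)a_1=a_1$ and $\tilde R(\sg)a_{w_0}=a_{w_0}$), a point the paper's proof leaves implicit. What the paper's route buys is the full $W$-module decomposition of $\rho$, which is more information than the proposition requires. Your argument is complete as stated.
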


\begin{proof}
First consider the case where $m\equiv2\pmod4$. Then by \cite[Prop.~3.5]{Mar},
$$\rho=\phi_{1,0}+\phi_{1,m/2}'+\phi_{1,m/2}''+\phi_{1,m}
       +\sum_{1\leq k \leq (m-2)/4}2\phi_{2,2k-1}.$$
As pointed out above, the second and third linear character are interchanged
by $\sg$. Moreover, according to \cite[Tab.~1]{Mar} for any $k$ the two copies
of $\phi_{2,2k-1}$ lie in submodules corresponding to the involution classes
of the two (non-conjugate) elements in $S$, which are interchanged
by $\sg$. Thus the extension of their sum vanishes on $W.\sg$.
Similarly, when $m\equiv0\pmod4$ then by \cite[Prop.~3.5]{Mar} we have
$$\rho=\phi_{1,0}+\phi_{1,m}+\sum_{1 \leq k \leq m/4}2\phi_{2,2k-1}.$$
The same argument as before applies.
\end{proof}

\begin{exmp}
The above covers in particular the exceptional graph automorphisms of the
two Weyl groups of type $B_2$ and $G_2$.
\begin{enumerate}
 \item Let $W$ be of type $B_2$ with $\sg$ interchanging the two generating
  reflections in $S$. Here $\Irr(W)$ has five elements, among which are the
  trivial representation $\phi_{1,0}$, the sign representation $\phi_{1,4}$
  and the reflection representation $\phi_{2,1}$. We denote by $\thi_{1,k}$
  the extension of $\phi_{1,k}$ to $\tilde W=\langle W,\sg\rangle$ taking
  value~1 on $\sg$, for $k=0,4$. Then by Proposition~\ref{prop:I2m} we have
  $$\tilde\rho=\thi_{1,0}+\thi_{1,4}.$$
 \item Let $W$ be of type $G_2$ with $\sg$ interchanging the two generating
  reflections in $S$. Here $\Irr(W)$ has six elements, among which are the
  trivial representation $\phi_{1,0}$, the sign representation $\phi_{1,6}$,
  and the reflection representation $\phi_{2,1}$. We denote by $\thi_{1,k}$
  the extension of $\phi_{1,k}$ to $\tilde W=\langle W,\sg\rangle$ taking
  value~1 on $\sg$, for $k=0,6$. Then by Proposition~\ref{prop:I2m} we have
  $$\tilde\rho=\thi_{1,0}+\thi_{1,6}.$$
 \end{enumerate}
\end{exmp}

\subsection{$D_4$ with $o(\sg)=3$}
Let $(W,S)$ be the Coxeter group of type $D_4$, and $\sg$ the exceptional
graph automorphism of $W$ of order~3. Here $\Irr(W)$ has 13 elements, seven
of which extend to $\tilde W=\langle W,\sg\rangle$. We denote the
irreducible characters of $W$ by pairs of partitions, as in the previous
section.

\begin{prop}   \label{prop:D4}
 Let $W$ be of type $D_4$ and $o(\sg)=3$. As a class function on $W.\sg$, we
 have
 $$\tilde\rho=\thi^{-,4}+\thi^{-,1^4}+\thi^{1,3}+\thi^{1,1^3}+2\thi^{1,21},$$
 where $\thi^*$ denotes the extension of $\phi^*$ to $W.\sg$ which
 takes positive integral value on $\sg$. (These are the preferred
 extensions as in Remark~\ref{rempref}.)
\end{prop}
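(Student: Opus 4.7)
The plan is to apply Remark~\ref{firstrem} and write $\tilde\rho=\sum_C\tilde\rho_C$ as a class function on $W.\sg$, where $C$ runs over the $\sg$-stable involution classes in $W=W(D_4)$. Labelling the Coxeter generators so that $s_1$ is the branch node and $\sg$ cyclically permutes the three leaves $s_0,s_2,s_3$, the $\sg$-stable subsets $I\subseteq S$ are exactly $\emptyset$, $\{s_1\}$, $\{s_0,s_2,s_3\}$ and $S$, with corresponding $\sg$-fixed parabolic longest elements $1$, $s_1$, $s_0s_2s_3$ and $w_0=-\Id$. I would first verify, by an argument analogous to that of Remark~\ref{bndn1} combined with direct inspection of the few remaining involution classes of $W(D_4)$, that these four classes in fact exhaust the $\sg$-stable involution classes.

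For each such class $C$ I would then use the $\tilde W$-analog of Lemma~\ref{luvo1}, namely
\[\tilde\rho_C=\Ind_{C_{\tilde W}(w_I)}^{\tilde W}(\tilde{\eps}_I),\]
where $\tilde{\eps}_I$ is the linear character on $C_{\tilde W}(w_I)=\tilde W_I\rtimes\tilde Y$ defined exactly as in Remark~\ref{bndn2}. Since the centralizers $C_{\tilde W}(w_I)$ and the irreducible characters of $\tilde W$ are small and explicit, each induced character can be decomposed by standard means (e.g.\ using the character table of $\tilde W=W(D_4)\rtimes\langle\sg\rangle$, of order $576$). Summing the contributions and restricting to $W.\sg$ should produce the stated linear combination of five extensions.

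To identify these extensions as the preferred ones, observe that $\tilde\rho$ is the character of a representation defined over $\QQ$ (the module $V$ carries the canonical $\QQ$-basis $\{a_w\}_{w\in\bI}$), so it takes rational values throughout $\tilde W$. By Remark~\ref{rempref}, the preferred extension in the $D_4$-triality case is characterised as the unique rational-valued extension of a $\sg$-fixed $\phi\in\Irr(W)$ to $\tilde W$; the two other extensions differ by tensoring with a non-trivial character of $\langle\sg\rangle$ and so take values in $\QQ(\omega)\setminus\QQ$, where $\omega$ is a primitive cube root of unity. Rationality of $\tilde\rho$ therefore forces only preferred extensions to appear with non-zero (integer) multiplicity on the coset $W.\sg$, which matches the formula in the proposition. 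The main obstacle is that, unlike the order-$2$ settings treated in Sections~\ref{sec:invmod} and~\ref{sec:andn}, Proposition~\ref{prop:inn} is unavailable here --- there is no central element $w_0\sg\in\tilde W$ to fix the extensions a priori --- so one must carry out the induced-character computation in full and carefully track the linear characters $\tilde{\eps}_I$ on each twisted centralizer.
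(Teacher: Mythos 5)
Your overall strategy is viable and genuinely different from the paper's: you decompose $\tilde\rho=\sum_C\tilde\rho_C$ over the four $\sg$-stable involution classes (represented by $1$, $s_1$, $s_0s_2s_3$, $w_0$; the remaining three classes, of size $6$, are indeed permuted in a triple by triality), and you correctly observe that Proposition~\ref{prop:inn} is unavailable since triality is outer and $w_0$ is central. But as written there are real gaps. First, the decisive content of the proposition --- the multiplicities $1,1,1,1,2$ and the fact that it is precisely the rational (preferred) extensions that occur --- is nowhere established: you reduce it to decomposing four monomial induced characters against the character table of the order-$576$ group $\tilde W$ and then assert that this ``should produce the stated linear combination''. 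Until that computation is actually carried out, the displayed formula is not proved. Second, the rationality argument overreaches. Rationality of $\tilde\rho$ only forces, for each $\sg$-stable $\phi$, that the two non-rational extensions (obtained from the rational one by tensoring with the non-trivial linear characters of $\tilde W/W$) occur with equal multiplicity; such a pair is itself rational-valued and contributes $(\omega+\omega^2)\,\thi|_{W.\sg}=-\thi|_{W.\sg}$ on the coset. So rationality gives integrality of the coefficients with respect to the preferred extensions, not that ``only preferred extensions appear'', and certainly not positivity; settling this again requires the explicit computation you defer. A minor point: $\teps$ on $C_{\tilde W}(w_I)$ cannot be ``defined exactly as in Remark~\ref{bndn2}'', whose formula uses reduced expressions in the Coxeter group of type $B_n$ with $\sg\in S$; here $\tilde W$ is not a Coxeter group, and one should instead use the intrinsic definition of $\teps_C$ from Remark~\ref{firstrem} (in particular $\teps_C(\sg)=1$ for the $\sg$-fixed representative $w_I$).

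For comparison, the paper avoids all computation in $\tilde W$: it quotes Marberg's decomposition of $\rho$ \cite[Thm.~1.4]{Mar}, notes that exactly five constituents are $\sg$-stable (with multiplicities $1,1,1,1,2$) while the other six are permuted in triples and so contribute nothing on $W.\sg$, and then pins down the extensions by the single trace evaluation $\tilde\rho(\sg)=|\bI^\sg|=8$: writing $\tilde\rho|_{W.\sg}=\sum_\phi n_\phi\thi$ with each $n_\phi$ a sum of $\langle\rho,\phi\rangle$ cube roots of unity, the value $8$ is attained only when every root equals $1$, i.e.\ only for the stated decomposition with the extensions having positive integral value on $\sg$. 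If you first obtained the decomposition of $\rho$ (from your four induced characters $\rho_C$, or simply from \cite{Mar}), this trace argument would let you finish without decomposing anything over $\tilde W$, and would simultaneously justify the identification of the extensions that your rationality argument only partially supplies.
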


\begin{proof}
By \cite[Thm.~1.4]{Mar} the decomposition of $\rho$ is given by
$$\rho=\phi^{-,4}+\phi^{-,1^4}+\phi^{1,3}+\phi^{1,1^3}+2\phi^{1,21}+
\phi^{2,+}+\phi^{2,-}+\phi^{1^2,+}+\phi^{1^2,-}+\phi^{-,31}+\phi^{-,21^2}.$$
Here, the first five constituents extend to $\tilde W$, while the last six are
permuted in triples by $\sg$. Now the values of the extensions $\thi^*$ of the
first five constituents on $\sg$ add up to $\tilde\rho(\sg)=8=|\bI^\sg|$, thus
these are exactly the constituents of $\tilde\rho$ on $W.\sigma$.
\end{proof}

\subsection{$F_4$ with $o(\sg)=2$}
Let $(W,S)$ be the Coxeter group of type $F_4$, and $\sg$ the exceptional
graph automorphism of $W$ of order~2. Here $\Irr(W)$ has 25 elements, eleven
of which extend to $\tilde W=\langle W,\sg\rangle$.

\begin{prop}   \label{prop:2F4}
 Let $W$ be of type $F_4$ and $o(\sg)=2$. As a class function on $W.\sg$, we
 have
 $$\tilde\rho=\thi_{1,0}+\thi_{9,2}+\thi_{9,10}+\thi_{1,24}
              +\thi_{12,4}-2\thi_{6,6}''+\thi_{6,6}',$$
 where $\thi_*$ are the extensions of $\phi_*$ printed in \cite[Tab.~1]{GM}.
\end{prop}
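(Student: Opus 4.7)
The plan is to proceed by direct case analysis over the $\sg$-stable conjugacy classes of involutions in $W$, in the spirit of the proof of Proposition~\ref{prop:D4}. First I would enumerate the involution classes of $W$ of type $F_4$ and identify those that are $\sg$-stable. For each such class $C$, I would choose a representative of the form $t=w_I$ with $I\subseteq S$ and $\sg(I)=I$, as in Lemma~\ref{luvo1} and Remark~\ref{bndn1}; such a choice is available because $\sg$ permutes the simple roots and preserves the conjugacy type of parabolic subgroups, and in type $F_4$ one can in each case arrange $I$ to be $\sg$-stable.

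Next, for each $\sg$-stable class $C$, I would compute $\tilde\rho_C$ as an induced character. By the argument outlined in Remark~\ref{firstrem} (together with the analog of Remark~\ref{bndn2} in the present setting), $V_C$ carries an extended action of $\tilde W$ affording
\[\tilde\rho_C=\Ind_{C_{\tilde W}(t)}^{\tilde W}(\teps_C),\]
where $\teps_C$ is the canonical extension of $\eps_I$ to $C_{\tilde W}(t)=C_W(t)\rtimes\langle\sg\rangle$. Using the character table of $\tilde W=W\rtimes\langle\sg\rangle$, I would then decompose each $\tilde\rho_C$, as a class function on the coset $W.\sg$, into a linear combination of the eleven extensions $\thi$ of the $\sg$-invariant characters $\phi\in\Irr(W)$; by Remark~\ref{firstrem}, induced characters from $\sg$-orbits of size $2$ on $\Irr(W)$ contribute zero on $W.\sg$ and may be discarded.

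Summing over all $\sg$-stable classes $C$ then yields a decomposition of $\tilde\rho$ of the claimed shape, up to the specific choice of extensions. The final step is to match the extensions produced by this construction with those tabulated in \cite[Tab.~1]{GM}: this amounts to computing, for each of the seven $\thi_*$ appearing with nonzero multiplicity, a single character value on a well-chosen element of $W.\sg$. The main obstacle is that, unlike the $D_n$ case, there is no Coxeter-theoretic labelling of $\tilde W$ pinning down canonical extensions, and unlike the inner case of Proposition~\ref{prop:inn} the signs cannot be determined from a central-element argument; in particular, the coefficient $-2$ of $\thi_{6,6}''$ and the cancellations that force four of the eleven possible $\sg$-invariant characters to drop out altogether have to be verified by an explicit character-table computation (most conveniently carried out in CHEVIE) and reconciled with the sign conventions of \cite{GM}.
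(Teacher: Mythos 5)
Your plan is correct and would establish the stated decomposition; there is no gap, but your route is organised somewhat differently from the paper's. The paper does not recompute the $W$-module structure from scratch: it quotes Casselman's explicit decomposition of $\rho$ for type $F_4$ \cite{Cas}, discards the constituents that are not $\sg$-invariant, uses Casselman's tables again to see that the two copies of $\phi_{4,1}$ and of $\phi_{4,13}$ lie in submodules $V_C$ attached to involution classes interchanged by $\sg$ (hence give trace $0$ on $W.\sg$), and is then left with computing traces of coset elements only on the two submodules of dimensions $18$ and $72$ belonging to the $\sg$-stable non-central involution classes, matching signs against the extensions of \cite[Tab.~1]{GM}. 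You instead compute $\tilde\rho_C=\Ind_{C_{\tilde W}(w_I)}^{\tilde W}(\teps_C)$ directly in $\tilde W$ for each $\sg$-stable class and decompose via the character table of $\tilde W$; your assertion that a $\sg$-stable $I$ can always be chosen is indeed verifiable here ($I=\emptyset$, $I=\{s_2,s_3\}$ of type $B_2$, $I=\{s_1,s_4\}$, and $I=S$ serve as representatives of the four $\sg$-stable classes, the non-stable classes being the two reflection classes and the two classes of type $B_3$/$C_3$, which are swapped by $\sg$ and contribute nothing on the coset, exactly as in Remark~\ref{firstrem}). Your approach is more self-contained, avoiding the external input from \cite{Cas} at the cost of redoing the $W$-level multiplicities that Casselman supplies, and it is computationally equivalent to the paper's trace calculation. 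You also correctly identify the genuinely delicate point, namely pinning the coefficients to the specific extensions of \cite[Tab.~1]{GM} (in particular the coefficient $-2$ of $\thi_{6,6}''$), which in both approaches reduces to explicit evaluations at well-chosen elements of $W.\sg$, most conveniently in {\sf Chevie} \cite{Chv}.
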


\begin{proof}
By the result of Casselman \cite[p.~39]{Cas}, the decomposition of $\rho$ is
given by
$$\begin{aligned}
\rho=&\,\phi_{1,0}+\phi_{9,2}+\phi_{8,3}'+\phi_{8,3}''+\phi_{8,9}'+
\phi_{8,9}''+ \phi_{9,10}+\phi_{1,24}+2\phi_{4,1}+2\phi_{4,13}\\
  &+3\phi_{12,4}+\phi_{9,6}'+2\phi_{6,6}''+\phi_{9,6}''+\phi_{6,6}'.
\end{aligned}$$
Of these, the characters of degree~8 and the characters $\phi_{9,6}',
\phi_{9,6}''$ do not extend to $\tilde W$. By \cite[p.39]{Cas} the two copies
of representations $\phi_{4,1}$ and $\phi_{4,13}$ lie in two submodules
corresponding to non-conjugate involutions interchanged by $\sg$,
so $W.\sg$ has trace~0 on their sum. By explicit computation of traces on the
submodules of dimensions~18 and~72 corresponding to the two $\sg$-fixed
non-central classes of involutions one finds the stated decomposition.
\end{proof}

\subsection{$E_6$ with $o(\sg)=2$}
Let $(W,S)$ be the Coxeter group of type $E_6$, and $\sg$ the graph
automorphism of $W$ of order~2; it is given by conjugation with the
longest element $w_0$. Here $\Irr(W)$ has 25 elements, all of which
extend to $\tilde W=\langle W,\sg\rangle$.

\begin{prop}   \label{prop:E6}
 Let $W$ be of type $E_6$ and $o(\sg)=2$. As a class function on $W.\sg$,
 we have
 $$\begin{aligned}
   \tilde\rho= &\,\thi_{1,0}+\thi_{6,1}+\thi_{20,2}+\thi_{64,4}+\thi_{60,5}
      +\thi_{81,6}+\thi_{24,6}+\thi_{81,10}+\thi_{60,11}+\thi_{24,12}\\
     &+\thi_{64,13}+\thi_{20,20}+\thi_{6,25}+\thi_{1,36}+ 2\thi_{30,3}
      +2\thi_{30,15}+2\thi_{80,7}+\thi_{90,8}+\thi_{10,9},
 \end{aligned}$$
 where $\thi_*$ denotes Lusztig's preferred extension of $\phi_*$ to 
 $W.\sg$ (see Remark~\ref{rempref}).
\end{prop}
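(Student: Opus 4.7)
The plan is to apply Proposition~\ref{prop:inn}, which is available here because for $W$ of type $E_6$ the unique non-trivial graph automorphism $\sg$ coincides with conjugation by the longest element $w_0$. Since $\sg$ is thereby inner, every conjugacy class $C \subseteq \bI$ is automatically $\sg$-stable, and Remark~\ref{firstrem} reduces the task to determining each $\rho_C$ as a $W$-character and then passing to the extension $\tilde\rho_C$ via Proposition~\ref{prop:inn}.

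First I would decompose $\rho$ as a $W$-character. By \cite[3.2.10]{GePf}, each involution class of $W(E_6)$ contains an element of the form $w_I$ (the longest element of a parabolic subgroup $W_I$ such that $w_I$ is central in $W_I$); these classes are few in number. By Lemma~\ref{luvo1}, $\rho_C=\Ind_{C_W(w_I)}^W(\eps_I)$, and the resulting induced characters can be decomposed using the character table of $W(E_6)$, or alternatively one may quote Casselman \cite{Cas} for the decomposition of the full $\rho$, as was done in the proof for $F_4$.

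Next, Proposition~\ref{prop:inn} yields
$$\tilde\rho_C=\sum_{\phi\in\Irr(W)}\langle\rho_C,\phi\rangle\,\thi,$$
where $\thi$ is the unique extension of $\phi$ characterised by $(-1)^{l(w_I)}\thi(w_0\sg)>0$. Summing over all $\sg$-stable classes $C$ and comparing with the right hand side of the statement then delivers the asserted multiplicities. The final step is to verify that each $\thi$ matches Lusztig's preferred extension: by Remark~\ref{rempref}, this amounts to checking
$$l(w_I)\equiv a_\phi\pmod{2}$$
for every constituent $\phi$ of $\rho_C$, where $a_\phi$ is the invariant from \cite[4.1]{LuB}. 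Consistency across different classes in which a given $\phi$ appears is automatic by the last sentence of Proposition~\ref{prop:inn}, since the sign in question depends only on $\phi(w_0)$.

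The main obstacle is bookkeeping rather than any conceptual difficulty: one needs the $a$-invariants of all $25$ irreducibles of $W(E_6)$, together with the lengths of longest elements of the handful of relevant parabolic subgroups, after which the result follows by a finite numerical verification. The only subtle point is correctly attributing each of the multiplicity-two constituents $\thi_{30,3}$, $\thi_{30,15}$, $\thi_{80,7}$ to the two distinct involution classes in which the underlying $\phi$ appears, but the parity compatibility ensured by Proposition~\ref{prop:inn} guarantees that the same extension is selected from each contribution, so the multiplicities simply add.
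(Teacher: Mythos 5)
Your proposal is correct and follows essentially the paper's own proof: Casselman's decomposition of $\rho$ from \cite{Cas}, then Proposition~\ref{prop:inn}, then a finite data check identifying the extensions singled out there with Lusztig's preferred extensions (the paper does this by inspecting the {\sf Chevie} tables of $W$ and $\tilde W$, you via the parity test $l(w_I)\equiv a_\phi \pmod 2$ from Remark~\ref{rempref}). One small imprecision: your claim that consistency for the multiplicity-two constituents is automatic from the last sentence of Proposition~\ref{prop:inn} is only valid when $\phi(w_0)\neq 0$, but this is harmless because your per-class parity verification already forces the same (preferred) extension to be selected from each class, so the multiplicities add as claimed.
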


\begin{proof}
By the result of Casselman \cite[p.~40]{Cas}, the decomposition of $\rho$ is
given by
$$\begin{aligned}
\rho=&\,\phi_{1,0}+\phi_{6,1}+\phi_{20,2}+\phi_{64,4}+\phi_{60,5}+\phi_{81,6}
      +\phi_{24,6}+\phi_{81,10}+\phi_{60,11}+\phi_{24,12}\\
     &+\phi_{64,13}+\phi_{20,20}+\phi_{6,25}+\phi_{1,36}+ 2\phi_{30,3}
      +2\phi_{30,15}+2\phi_{80,7}+\phi_{90,8}+\phi_{10,9}.
\end{aligned}$$
So the claim follows from Proposition~\ref{prop:inn} and inspection
of the {\sf Chevie} \cite{Chv} tables of $W$ and $\tilde{W}$.
\end{proof}

\begin{rem} Assume that $W$ is an irreducible Weyl group and $\sg$ is 
ordinary (see Remark~\ref{rempref}). Then the results in this
and the previous section show that, in the decomposition of $\tilde{\rho}$
as a class function on $W.\sg$, all multiplicities are $\geq 0$ if we
choose Lusztig's preferred extensions of the characters in $\Irr(W)^\sg$
as in Remark~\ref{rempref}.
\end{rem}

\section{Frobenius--Schur indicators and Fourier matrices} \label{sec:FS}

We shall now interpret the multiplicity formulae for $\tilde{\rho}$
obtained in the previous two sections in terms of Fourier matrices.

\subsection{}
The Fourier matrix associated with $W, \sg$ is a matrix with rows and
columns labelled by two finite, purely combinatorially defined sets
$\bar{X}(W,\sg)$ and $\Lambda(W,\sg)$, respectively. We have $|\bar{X}(W,
\sg)|=|\Lambda(W,\sg)|$ and there is a well-defined injection $\Irr(W)^\sg
\hookrightarrow \Lambda(W,\sg)$. Let us briefly recall how these are
defined.

If $W$ is a Weyl group and $\sg$ is ordinary  (see Remark~\ref{rempref}),
then $\bar{X}(W,\sg)$ is the set defined in \cite[4.21.11]{LuB}, which is 
in bijection with the set of unipotent characters of a corresponding finite 
group of Lie type by \cite[Main Theorem~4.23]{LuB}. The set $\Lambda(W,
\sg)$ is obtained by choosing a complete set of representatives for the 
$M$-orbits on the set $X(W,\sg)$ defined in \cite[4.21.12]{LuB} (with $M$ 
as in \cite[4.16]{LuB}). Then \cite[4.21.14]{LuB} gives rise to the natural 
injection $\Irr(W)^\sg \hookrightarrow \Lambda(W,\sg)$. The corresponding 
Fourier matrix is obtained as follows. The entry for $\bar{x} \in \bar{X}
(W,\sg)$ and $y \in \Lambda(W,\sg)$ is given by evaluating the canonical 
pairing $\bar{X} (W,\sg) \times X(W,\sg)\rightarrow \bar{\QQ}_l$ in 
\cite[4.21.13]{LuB} on $(\bar{x}, y)$ and then multiplying the result by 
a sign $\Delta(\bar{x})=\pm 1$, as defined in \cite[p.~124/126]{LuB}. If $W$ 
is not a Weyl group, or if $W$ is of type $B_2$, $G_2$ or $F_4$ and $\sg$ 
is not ordinary, the Fourier matrix has been described by heuristic methods 
in \cite{exot} and \cite{GM}, respectively.

Now, if $\sg=\id$, there is a canonical identification $\Lambda(W,\sg)=
\bar{X}(W,\id)$, hence the Fourier matrix is canonically defined in this 
case. Otherwise, there are certain choices involved in the definition of 
the Fourier matrix. As far as the entries corresponding to the image of
$\Irr(W)^\sg$ in $\Lambda(W,\sg)$ are concerned, these depend precisely
on the choices of extensions of the characters in $\Irr(W)^\sg$ to the
coset $W.\sg$.

The following result provides the promised interpretation of the
decomposition of $\tilde{\rho}$ in the case where $W$ and $\sg$ arise from a 
simple algebraic group $G$ and an endomorphism $F\colon G \rightarrow G$ 
as in Section~\ref{sec:intro}. In this case, the Fourier matrix 
describes the multiplicities
\[ \langle \chi,R_{\thi}\rangle \qquad \mbox{where} \qquad
R_{\thi}:=\frac{1}{|W|} \sum_{w \in W} \thi(w\sg) R_{T_w,1},\]
for any $\chi \in \Uch(G^F)$ and $\phi \in \Irr(W)^\sg$; see 
\cite[Main Theorem 4.23]{LuB}.

\begin{thm}   \label{thm:main}
 Let $W$ be a finite irreducible Weyl group with generating set $S$, and
 $\sg\colon W \rightarrow W$ a non-trivial automorphism with $\sg(S)=S$. For
 each $\phi \in \Irr(W)^\sg$, we fix an extension $\thi$ to $W.\sg$. Then
 the $\{1,-1, 0\}$-vector of Frobenius--Schur indicators of the unipotent
 characters of the corresponding twisted groups of Lie type (indexed by the
 set $\bar{X}(W,\sg)$) is mapped under Fourier transform and restriction to
 the image of $\Irr(W)^\sg$ in $\Lambda(W,\sg)$ onto the vector of
 multiplicities in the decomposition of $\tilde{\rho}$ as a class function
 on $W.\sg$.
\end{thm}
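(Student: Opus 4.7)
The plan is to verify Theorem~\ref{thm:main} by a case-by-case analysis that rests on the explicit decompositions of $\tilde\rho$ obtained in Sections~\ref{sec:andn} and~\ref{sec:exc}. As a first observation, both sides of the asserted identity transform in the same way under changes of extensions: replacing a single $\thi$ by $-\thi$ negates both the corresponding row of the Fourier matrix and the coefficient $n_\phi$ in the decomposition $\tilde\rho_C=\sum_\phi n_\phi\thi$. Hence it suffices to prove the theorem after fixing one specific choice, and I would take throughout Lusztig's preferred extensions of Remark~\ref{rempref}. This is the normalisation under which the Propositions of Sections~\ref{sec:andn}--\ref{sec:exc} give the cleanest formulae, and which matches the normalisation implicit in the Fourier matrices of \cite{LuB} (for ordinary $\sg$) and \cite{GM} (for $\tw2B_2$, $\tw2G_2$, $\tw2F_4$).

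Next I would set up the two vectors being compared. On the combinatorial side, the multiplicity vector $(n_\phi)_{\phi\in \Irr(W)^\sg}$ is read off from Propositions~\ref{prop:signA}, \ref{prop:signD2}, \ref{prop:D4}, \ref{prop:2F4}, \ref{prop:E6} (and, for the dihedral cases relevant to $\tw2B_2$ and $\tw2G_2$, from Propositions~\ref{prop:I2modd} and~\ref{prop:I2m}). On the geometric side, the Frobenius--Schur indicator vector $\nu \in \{0,\pm1\}^{\bar X(W,\sg)}$ of the unipotent characters of $G^F$ is determined by the known description of the twisting action of $F$ on each family (going back to \cite{LV} in the untwisted case). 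The theorem then becomes the arithmetic identity
\[
   n_\phi \;=\; \sum_{\bar x\in \bar X(W,\sg)} \{\bar x,\phi\}\,\nu(\bar x)
   \qquad \text{for every } \phi\in \Irr(W)^\sg,
\]
where $\{\,\cdot\,,\,\cdot\,\}$ denotes the pairing of \cite[4.21.13]{LuB} (resp.\ \cite{GM}) combined with the sign $\Delta(\bar x)$.

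I would then run through each admissible pair $(W,\sg)$ in turn. For $\tw2A_n$ the Fourier block indexed by $\Irr(W)^\sg$ is essentially the character table of $\fS_n$ (up to signs determined by $a$-invariants), and the required identity reduces to a classical statement about involutions in $\fS_n$, parallel to the argument of Proposition~\ref{prop:signA}. For $\tw2D_n$ the verification proceeds family-by-family: inside each family the Fourier pairing is an $\FF_2$-valued symplectic form, and one checks that the restriction of $\nu$ sums to $2^{d(\al,\bt)}$ on the unique special constituent with $|\al|>|\bt|$ and to $0$ elsewhere, exactly matching Proposition~\ref{prop:signD2} together with Lemma~\ref{main2d}. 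The remaining cases $\tw3D_4$, $\tw2E_6$, $\tw2F_4$, $\tw2B_2$ and $\tw2G_2$ are finite checks against the tabulated Fourier matrices of \cite{LuB, GM} combined with the known FS indicators of the corresponding unipotent characters.

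The principal technical obstacle is the compatibility of the two choices of extensions --- the one implicit in the setup of the Fourier matrix and the one used to decompose $\tilde\rho$. Since in the twisted case the Fourier matrix entries along the $\Irr(W)^\sg$-rows and the multiplicities $n_\phi$ are each only well-defined up to a sign depending on the chosen extension, the heart of the proof lies in showing that Lusztig's preferred extensions of Remark~\ref{rempref} produce matching signs on \emph{both} sides. This is most delicate in type $\tw2D_n$, where the lower-row convention of Remark~\ref{rempref} must be shown to line up with the sign $\teps_C$ of Remark~\ref{bndn2}, and in type $\tw2E_6$, where the sign is governed by the $a$-invariant through Proposition~\ref{prop:inn}. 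Once these sign matchings are in place, the theorem follows from the numerical data already compiled in Sections~\ref{sec:andn} and~\ref{sec:exc}.
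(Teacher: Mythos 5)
Your overall strategy coincides with the paper's: observe that the assertion is independent of the choice of extensions provided the same choice is used on both sides, fix Lusztig's preferred extensions of Remark~\ref{rempref}, and then verify the identity case by case against the decompositions of $\tilde\rho$ computed in Sections~\ref{sec:andn} and~\ref{sec:exc}. The sign-compatibility issue you isolate (matching the preferred-extension convention with the symbol convention of \cite[4.18]{LuB} in type $\tw2D_n$, and with the $a$-invariant in the other cases) is indeed where the paper's proof spends its effort, and your outline for $\tw2D_n$, $\tw3D_4$ and the non-ordinary cases is workable in principle.

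The genuine gap is on the Frobenius--Schur side, above all in type $\tw2A_n$. You treat the indicator vector as ``known \dots\ going back to \cite{LV}'', but \cite{LV} concerns the split case, where all indicators are $0$ or $1$; in the twisted cases they are not, and the proof must supply them explicitly: Ohmori \cite{Oh} (or \cite{Lu02}) for $\GU_n(q)$, Lusztig \cite{Lu02} for $\tw2D_n$, and Geck \cite{Ge03} for $\tw2E_6$, $\tw3D_4$, $\tw2F_4$. Moreover, your description of the $\tw2A_n$ Fourier block as ``essentially the character table of $\fS_n$'' is incorrect: every family is a singleton, so the block is diagonal with entries $\pm1$ given by the $\Delta$-sign, namely $(-1)^{A_\al}$ for the extension in which $\sg$ acts as $w_0$, hence $(-1)^{a_\al+A_\al}$ after passing to the preferred extension. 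In this case the theorem is therefore precisely the assertion that $\nu(\chi_\al)=(-1)^{a_\al+A_\al}$, and this does not reduce to a classical fact about involutions in $\fS_n$ (those facts were already consumed in proving Proposition~\ref{prop:signA}); it requires converting Ohmori's formula $\nu(\chi_\al)=(-1)^{\lfloor k/2\rfloor}$, with $k$ the size of the $2$-core of $\al$, into the congruence $A_\al\equiv a_\al+\lfloor k/2\rfloor\pmod 2$, which the paper obtains from a Harish--Chandra series and Ennola-duality argument on degree polynomials (claim $(\dagger)$ in its proof). Without this step, and without the indicator data of \cite{Ge03} together with the $\delta_\chi$-signed Ennola comparison with the split $E_6$ Fourier matrix used for $\tw2E_6$, your case-by-case check cannot be completed. (A minor slip: replacing $\thi$ by $-\thi$ negates a \emph{column}, not a row, of the Fourier matrix, since $\Irr(W)^\sg$ embeds into $\Lambda(W,\sg)$, which labels the columns.)
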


\begin{proof}
First note that the assertion does not depend on the actual choices
of the extensions $\thi$, as long as we use the same extensions both for
the Fourier matrix and for the decomposition of $\tilde{\rho}$. Now we
consider the various cases. 

Let first $W=\fS_n$ with the non-trivial graph automorphism $\sg$. We have
a natural parametrisation $\Uch(\GU_n(q))=\{\chi_\al \mid \al \vdash n\}$
where the degree of $\chi_\al$ is given in terms of a well-defined polynomial 
in $q$; see Lusztig \cite[\S 9]{Lu1} for further details. Let $A_\al$ 
denote the degree in $q$ of the degree polynomial of $\chi_\al$; its 
order of vanishing at $q=0$ is the invariant $a_\al$ already mentioned in
Remark~\ref{rempref}. We claim that 
\begin{itemize}
\item[($\dagger$)] $\chi_\al$ has Frobenius--Schur indicator $(-1)^{a_\al+
A_\al}$. 
\end{itemize}
This is seen as follows. By Ohmori \cite{Oh} (see also \cite{Lu02}), the
Frobenius--Schur indicator of $\chi_\al$ is given by $(-1)^{\lfloor k/2
\rfloor}$ where $\kappa$ is the $2$-core of $\al$. Now $\chi_\al$ lies in the 
Harish-Chandra series of the (unique) cuspidal unipotent character $\la$ 
of $\GU_k(q)$ (see \cite[9.6]{Lu1}). In particular (\cite[7.8]{Lu1}), its 
degree polynomial is divisible by the same power of $q-1$ as that of $\la$,
viz.\ $q-1$ to the power ${\lfloor k/2\rfloor}$. Now note that unipotent 
characters of general linear groups all lie in the principal series, so
their degree polynomials are not divisible by $q-1$; thus, by Ennola 
duality (see \cite[9.5]{Lu1}) the degree polynomials of unipotent characters 
of unitary groups are not divisible by $q+1$. So the only odd degree factors 
of these degree polynomials are $q$ and $q-1$. In particular, we obtain 
$A_\al\equiv a_\al+ {\lfloor k/2\rfloor}\pmod 2$. Thus ($\dagger$) is
proved. Now consider the corresponding Fourier matrix. Since all families 
are singletons, this matrix is diagonal with $\pm 1$ on the diagonal. By
the description of the pairing $\bar{X}(W,\sg) \times \Lambda(W,\sg)$ in
\cite[4.19]{LuB} and that of the $\Delta$-function in \cite[p.~124]{LuB}
(see also p.~235 in the proof of \cite[Prop.~7.6]{LuB}), we have $\langle 
\chi_\al,R_{\thi_\al}\rangle=(-1)^{A_\al}$ where $\thi_\al$ is the 
extension of $\phi_\al$ in which $\sg$ acts as the longest element $w_0$. 
Passing to the preferred extension, we conclude 
that the corresponding diagonal entry of the Fourier matrix is 
$(-1)^{a_\al+A_\al}$. Hence, multiplication with the Fourier matrix indeed 
gives the multiplicities computed in Proposition~\ref{prop:signA}. \par

In Type $D_n$, $o(\sg)=2$, all Frobenius--Schur indicators of the unipotent
characters of $\tw2D_n(q)$ are equal to~$+1$ by Lusztig \cite[1.13]{Lu02}.
The Fourier matrix is described in \cite[4.18]{LuB}. It is a block diagonal
matrix with blocks corresponding to the various $\sg$-stable families of
$\Irr(W)$. It is clear that the all $1$ vector transforms to the vector
with value $\pm 2^d$ at the image of a $\sg$-stable special character in
$\Lambda(W, \sg)$ (where $2^d$ is the order of the finite group associated
with the family containing the given special character) and $0$ otherwise.
We need to show that, if we choose preferred extensions as in
Proposition~\ref{prop:signD2}, then the above values are always $+2^d$.
For this purpose, it will be enough to show that the entries in the Fourier
matrix corresponding to the preferred extension of a $\sg$-stable special
character are all $\geq 0$. But this follows from the description in
\cite[4.18]{LuB}. Indeed, let $\phi\in\Irr(W)$ be a $\sg$-stable special
character.
As in \cite[4.6]{LuB}, we have a corresponding nondegenerate symbol $Z$
with two rows of equal length. Let $Z_1$ be the set of singles in $Z$ and
$Z_2$ be the set of doubles in $Z$. Furthermore, let $Z_1=M_0 \amalg M_0'$
be the partition defined by the two rows of $Z$, where $M_0,M_0'$ are
distinguished one from another by the inequality $\sum_{x \in M_0} x<
\sum_{x \in M_0'} x$, as in \cite[p.~93]{LuB}. Then, as in
\cite[p.~117]{LuB}, the two extensions of $\phi$ are labelled by the symbols
\[\left(\begin{array}{c} Z_2 \amalg M_0'\\
    Z_2 \amalg M_0 \end{array}\right) \qquad \mbox{and} \qquad
  \left(\begin{array}{c} Z_2 \amalg M_0\\
    Z_2 \amalg M_0' \end{array}\right),\]
respectively. Now recall that a preferred extension is characterised by
the condition that the smallest element of $Z_1$ has to appear in the
lower row of the symbol associated with the extension. Hence, using
an argument as in the proof of Proposition~\ref{prop:signD2}, it
immediately follows that the preferred extension of a special character
is labelled by the first of the above two symbols (the one where $M_0$
is in the lower row). The formula at the end of \cite[4.18]{LuB} then
shows that the corresponding entries in the Fourier matrix are all
$\geq 0$, as required. Thus, we obtain the vector of multiplicities in
$\tilde{\rho}$ as computed in Proposition~\ref{prop:signD2}.
\par
For $W=E_6$ with $\sg$ the non-trivial graph automorphism, it is shown
in \cite[5.6, 6.5]{Ge03} that all unipotent characters of $\tw2E_6(q)$ have
Frobenius--Schur indicator $+1$, except for the two cuspidal unipotent
characters denoted $\tw2E_6[\theta]$, $\tw2E_6[\theta^2]$ where the 
indicator is $0$, and for the two characters lying above the cuspidal 
unipotent character of $\GU_6(q)$, with indicator $-1$. The Fourier 
matrices are described in \cite[Thm.~1.15]{LuE} (see also 
\cite[Prop.~7.11]{LuB}), with respect to the extensions of the $\sg$-stable
characters of $W$ which have positive value on $w_0\sg$. Let $E_6(q)$ be 
the Chevalley group of split type $E_6$. Then there is a bijection $\chi 
\leftrightarrow \chi'$ between $\Uch(\tw2E_6(q))$ and $\Uch(E_6(q))$ such 
that 
\[ \langle \chi,R_{\thi}\rangle=\delta_\chi \langle \chi',R_\phi\rangle
\qquad \mbox{for all $\chi\in \Uch(\tw2E_6(q))$ and $\phi\in\Irr(W)^\sg$},\]
where $\delta_\chi=\pm 1$ is determined by the condition that the degree
polynomial of $\chi$ is obtained from that of $\chi'$ by formally replacing
$q$ by $-q$ and then multiplying by $\delta_\chi$. Hence, using the Fourier
matrix for $E_6(q)$ and the table of degree polynomials in 
\cite[p.~363]{LuB}, and then passing to the preferred extensions, the claim 
follows by comparison with Proposition~\ref{prop:E6}.
\par
For $W=D_4$, $o(\sg)=3$, the corresponding finite reductive group
$\tw3D_4(q)$ has five families of unipotent characters, with $1,1,4,1,1$
characters respectively (see \cite[1.17]{LuE}), and all Frobenius--Schur 
indicators are equal to~$1$ (see \cite[Tab.~1]{Ge03}). The Fourier 
matrix (see \cite[Thm.~1.18]{LuE} or \cite[Prop.~7.6]{LuB}) transforms this 
to the vector $(1,1,2,0,0,0,1,1)$, which upon comparing the labels, just gives 
the decomposition in Proposition~\ref{prop:D4}.
\par
Now consider the cases where $\sg$ is not ordinary.
Let first $W=B_2$, $o(\sg)=2$. There are
three families of unipotent characters for the corresponding finite reductive
group $\tw2B_2(q^2)$, with 1,2,1 characters respectively (see e.g.
\cite[2.3]{GM}), and Frobenius--Schur indicators $1,0,0,1$. The transform
under the Fourier matrix is thus $(1,0,0,1)$, where the entries equal to $1$
correspond to the class functions $\thi_{1,0},\thi_{1,4}$ on $W.\sg$,
respectively. By Proposition~\ref{prop:I2m} this is the vector of
multiplicities in $\tilde\rho$, as claimed. \par
For $W=G_2$, $o(\sg)=2$, the corresponding finite reductive group
$^2G_2(q^2)$ has three families of unipotent characters, with $1,6,1$
characters respectively, and Frobenius--Schur indicators $1,0,0,0,0,0,0,1$.
This is an eigenvector of the Fourier transform matrix given in
\cite[Thm.~5.4]{GM}, and the claim follows with Proposition~\ref{prop:I2m}.
\par
For $W=F_4$, $o(\sg)=2$, the corresponding finite reductive group
$\tw2F_4(q^2)$ has seven families of unipotent characters, with $1,1,1,1,
2,2,13$ characters respectively. The Frobenius--Schur indicators in the
1-element families are equal to~1, in the 2-element families equal to~0,
and on the 13-element family are given by $(1,1,1, 1,1,1,0,0,0,0,0,0,-1)$
(see \cite[\S7]{Ge03}). Multiplication with the Fourier transform matrix
\cite[Tab.~2]{GM} here gives the vector $(1,0,1,-2,0,0,0,0,-1,0,0,0,0)$,
which upon comparing labels with loc.\ cit.\ gives the claim by
Proposition~\ref{prop:2F4}.
\end{proof}

\begin{rem} 
Lusztig \cite[3.9]{Lu2} has shown that one can attach to each
$\chi \in \Uch(G^F)$ a corresponding ``eigenvalue of Frobenius'',
which is a root unity and will be denoted by $\Fr(\chi)$. The results on 
character fields for unipotent characters in \cite[\S 5]{Ge03} and 
\cite[\S 4]{GM} show that, for any $\chi \in \Uch(G^F)$, we have 
\[ \nu(\chi)=0 \qquad \Longleftrightarrow \qquad \Fr(\chi) \mbox{ is 
  non-real}.\]
Under the Fourier matrix, the unipotent characters transform to a new
set of class functions which are labelled by $\Lambda(W,\sg)$; see 
\cite[4.24.1]{LuB}. These class functions are called ``almost characters''
of $G^F$. One can also attach a Frobenius eigenvalue to any such almost
character (by using ``twisting operators'';  see \cite[\S 5]{GM} and the 
references there).

Now, it is not true in general that the vector of Frobenius--Schur 
indicators, multiplied by the Fourier-transform, has non-zero values only 
on the image of $\Irr(W)^\sg$ in $\Lambda(W,\sg)$. For example, in type 
$F_4$, we also find non-zero values for $F_4^I[1]$; in type $E_8$ for 
$E_8^I[1]$ (recall that, in the split case, we can identify $\bar{X}(W,\sg)=
\Lambda(W,\id)$); and in $\tw2F_4$ for the element of $\Lambda(W,\sg)$ 
denoted $\Psi_4$ in \cite[Tab.~2]{GM}. We would like to point out that 
in each case, this is one out of the two ``cuspidal'' elements in
$\Lambda(W,\sg)$ with attached Frobenius eigenvalue~$1$.

\par
We are not aware of any, even heuristical, explanation of this.
\end{rem}

\subsection{}
As in \cite[\S 6.4]{LV}, we shall now reformulate Theorem~\ref{thm:main}
without reference to Fourier matrices. One advantage of this reformulation
will be that it does not involve choices of extensions of $\sg$-stable 
characters of $W$. We will also be able to drop the assumption that $W$ 
is irreducible. Let $G$ be a connected reductive algebraic group over 
$\overline{\FF}_p$ and $F \colon G \rightarrow G$ be an endomorphism as in 
Section~\ref{sec:intro}. Recall the definitions of $\Uch(G^F)$ and 
$\CF_0(G^F)$. The map $F$ induces an automorphism $\sg \colon W 
\rightarrow W$ such that $\sg(S)=S$. Let $\tilde{W}=W \rtimes \langle \sg 
\rangle$. For any class function $f$ on the coset $W.\sg$, we define
\[ R_f:=\frac{1}{|W|} \sum_{w \in W} f(w\sg) \, R_{T_w,1}.\]
With this notation, we can now state:

\begin{cor}[Cf.\ Lusztig--Vogan \protect{\cite[6.4(b)]{LV}} for the case
$\sg=\id$] \label{cor1}
 Let $\tilde{\rho}$ be the character of the extended involution module,
 as in Section~\ref{sec:invmod}. Then
 \[ R_{\tilde{\rho}}=\bigl(\sum_{\chi \in \operatorname{Uch}(G^F)}
    \nu(\chi) \, \chi\bigr)_0\]
 where $\nu(\chi)$ denotes the Frobenius--Schur indicator of $\chi$.
\end{cor}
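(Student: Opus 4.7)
The plan is to expand both sides of the desired identity in a common basis of $\CF_0(G^F)$ and then invoke Theorem \ref{thm:main} to match coefficients. First, fix for each $\phi \in \Irr(W)^\sg$ an extension $\thi$ of $\phi$ to the coset $W.\sg$. The standard orthogonality relations for Deligne--Lusztig characters, combined with the bijection between $F$-conjugacy classes of $W$ and $\sg$-stable irreducible characters of $W$, imply that the almost characters
$$R_{\thi} \;=\; \frac{1}{|W|} \sum_{w \in W} \thi(w\sg)\, R_{T_w,1} \qquad (\phi \in \Irr(W)^\sg)$$
form an orthonormal basis of $\CF_0(G^F)$. Consequently, the orthogonal projection onto $\CF_0(G^F)$ is given by Fourier expansion in this basis, so
$$\bigl(\sum_{\chi \in \Uch(G^F)} \nu(\chi)\,\chi\bigr)_0 \;=\; \sum_{\phi \in \Irr(W)^\sg} \Bigl(\sum_\chi \nu(\chi)\,\langle \chi, R_{\thi}\rangle\Bigr) R_{\thi}.$$

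On the other hand, writing $\tilde\rho|_{W.\sg} = \sum_\phi n_\phi\, \thi|_{W.\sg}$ as in Remark \ref{firstrem} and substituting into the definition of $R_{\tilde\rho}$ yields $R_{\tilde\rho} = \sum_\phi n_\phi\, R_{\thi}$. Hence it suffices to prove, for each $\phi \in \Irr(W)^\sg$, the scalar identity
$$n_\phi \;=\; \sum_{\chi \in \Uch(G^F)} \nu(\chi)\, \langle \chi, R_{\thi}\rangle.$$
This is precisely the content of Theorem \ref{thm:main}: the numbers $\langle \chi, R_{\thi}\rangle$ are the entries of the Fourier matrix (as recalled just before its statement), and restriction to the image of $\Irr(W)^\sg$ in $\Lambda(W,\sg)$ singles out exactly the coefficients $n_\phi$.

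The remaining step is the reduction from general $G$ to the case where $W$ is irreducible, to which Theorem \ref{thm:main} applies directly. For this one writes $W$ as a product of $\sg$-orbits of simple factors; the unipotent characters, the uniform subspace $\CF_0(G^F)$, the Fourier data, and the involution module $V$ all decompose compatibly along this decomposition, and contributions coming from $\sg$-orbits of length $>1$ vanish on the coset $W.\sg$ (compare Remark \ref{firstrem}), matching the vanishing contribution from the corresponding factor on the unipotent side.

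The main subtlety, which is in the end not a real obstacle, is to ensure that the choices of extensions $\thi$ are used consistently on the two sides of the above identity. The statement of Corollary \ref{cor1} is intrinsic and involves no such choice (since $\tilde\rho$ is defined as a class function on $W.\sg$), and Theorem \ref{thm:main} is already phrased so as to be independent of the particular extensions used, provided the same extensions appear in the Fourier matrix and in the decomposition of $\tilde\rho$.
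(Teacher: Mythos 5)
Your argument for the case where $W$ is irreducible is essentially the paper's: the almost characters $R_{\thi}$, $\phi\in\Irr(W)^\sg$, form an orthonormal basis of $\CF_0(G^F)$, the projection is Fourier expansion in that basis, $R_{\tilde\rho}=\sum_\phi n_\phi R_{\thi}$, and Theorem~\ref{thm:main} (resp.\ \cite[6.4]{LV} when the induced automorphism of the factor is trivial) identifies $n_\phi$ with $\sum_\chi\nu(\chi)\langle\chi,R_{\thi}\rangle$; your remark on the independence of the choice of extensions is also correct. (A minor omission: one should first reduce to $G$ semisimple of adjoint type, using that $\Uch(G^F)$ and the $R_{T_w,1}$ are insensitive to the centre, so that ``$W$ irreducible'' really corresponds to a product of simple groups.)

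The genuine gap is in your reduction to irreducible $W$. For a $\sg$-orbit of simple factors of length $d>1$, i.e.\ $G=G_1\times\cdots\times G_1$ with $F$ permuting the factors cyclically and $F^d(G_1)=G_1$, the contributions do \emph{not} vanish on either side: $G^F\cong G_1^{F^d}$ is a genuine finite group of Lie type whose unipotent characters have non-zero Frobenius--Schur indicators, and the extended involution module for $W=W_1\times\cdots\times W_1$ with the cyclic twist does not vanish on the coset $W.\sg$ (for instance $\tilde\rho(\sg)=|\bI^\sg|>0$). Remark~\ref{firstrem} only says that non-$\sg$-stable conjugacy classes of involutions contribute zero; it does not make whole factors disappear, so your claim that ``contributions coming from $\sg$-orbits of length $>1$ vanish on the coset, matching a vanishing contribution on the unipotent side'' is false. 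What is actually needed, and what the paper proves, is the compatibility $\tilde\rho(w\sg)=\tilde\rho_1(p(w)\sg^d)$ for all $w\in W$, where $p(w_1,\ldots,w_d)=w_1\cdots w_d$ and $\tilde\rho_1$ is the extended involution module character for $(W_1,\sg^d)$; combined with the fact that the isomorphism $G^F\cong G_1^{F^d}$ carries $R_{T_w,1}$ to $R_{T_{p(w)},1}$, this reduces the identity for $G$ to the identity for $G_1$ with Frobenius $F^d$ (which may be split or twisted), and induction on $\dim G$ finishes the argument. Establishing $\tilde\rho(w\sg)=\tilde\rho_1(p(w)\sg^d)$ requires an explicit bijection between $\bI\cap C_W(w\sg)$ and $\bI_1\cap C_{W_1}(w_1\sg^d)$ together with a check that the corresponding signs agree; this step is missing from your proposal and cannot be replaced by the vanishing claim.
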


\begin{proof}
By \cite[Prop.~7.10]{DeLu}, the set $\Uch(G^F)$ is ``insensitive'' to the
centre of $G$, and similarly for $R_f$. Hence, we may assume without
loss of generality that $G$ is semisimple of adjoint type. We now proceed
by induction on $\dim G$, where we use a reduction argument analogous
to \cite[8.8]{LuB}.

If $G$ itself is simple, then the result follows from
Theorem~\ref{thm:main}, exactly as in \cite[6.4]{LV}. Next assume that we
have a non-trivial factorisation $G=G_1 \times G_2$ where both $G_1$
and $G_2$ are $F$-stable. Then $G^F=G_1^F \times G_2^F$ and the result
easily follows by induction. It remains to consider the following case: we
have $G=G_1 \times \cdots \times G_1$ (say, $d\geq 2$ factors) where $G_1$
is simple of adjoint type; furthermore, $F$ cyclicly permutes the factors
and $F^d(G_1)=G_1$. We then have a natural isomorphism $G^F \cong
G_1^{F^d}$ which preserves unipotent characters. Hence, the right hand
side of the desired equality is preserved under this isomorphism. It remains
to see what happens on the left hand side. By the definition of
$R_{\tilde{\rho}}$, this is easily reduced to a question about the
extended involution module, purely on the level of $W$ and $\sg$. Now,
we have $W=W_1\times \cdots \times W_1$ (where $W_1$ is the Weyl group of
$G_1$) and $\sg$ cyclicly permutes the factors such that $\sg^d(W_1)=W_1$.
We need to compare the characters $\tilde\rho$ of the extended involution module
for $W,\sg$ and $\tilde\rho_1$ of the one for $W_1,\sg^d$, respectively. \par
We may assume that for $(w_1,\ldots,w_d)\in W_1\times\cdots\times W_1=W$ we
have $\sg(w_1,\ldots,w_d)=(w_2,\ldots,w_d,\sg^d(w_1))$. The map
$$p:W\rightarrow W_1,\qquad (w_1,\ldots,w_d)\mapsto w_1\cdots w_d,$$
is surjective with all fibres of size $|W_1|^{d-1}$, and sends $\sg$-conjugacy
classes in $W$ to $\sg^d$-conjugacy classes in $W_1$.
\par
Now note that the above isomorphism $G^F\cong G_1^{F^d}$ sends $R_{T_w,1}$
to $R_{T_{p(w)},1}$.
Thus, it remains to show that $\tilde\rho(w\sg)=\tilde\rho_1(p(w)\sg^d)$ for
all $w\in W$. Clearly it is sufficient to check this for class representatives,
say on elements of the form $w=(w_1,1,\ldots,1)$, where $p(w)=w_1$.
The only terms contributing to the trace $\tilde\rho(w\sg)$ come from basis
elements $a_t$ ($t\in \bI$) with $w\sg.a_t=\pm a_t$, that is, with
$w\sg(t)w^{-1}=t$, whence $t\in C_W(w\sg)$.
Similarly, contributions to $\tilde\rho_1(w_1\sg^d)$ come from elements
$a_{t_1}$ with $t_1\in\bI_1\cap C_{W_1}(w_1\sg^d)$. Now
$$C_W(w\sg) =\{(g,\sg^d(g),\ldots,\sg^d(g))\mid g\in C_{W_1}(w_1\sg^d)\},$$
inducing a natural bijection
$$\bI\cap C_W(w\sg)\buildrel{1-1}\over\longrightarrow \bI_1\cap C_{W_1}(w_1\sg^d),
  \qquad  t=(t_1,\sg^d(t_1),\ldots,\sg^d(t_1))\mapsto t_1.$$
But it is clear from the definition of the involution module that
$$(w_1,1,\ldots,1)\sg.a_t=(w_1,1,\ldots,1)a_{(\sg^d(t_1),\ldots,\sg^d(t_1))}$$
and $w_1\sg^d.a_{t_1}=w_1.a_{\sg^d(t_1)}$ have the same sign.
\end{proof}

\subsection{}
Finally, as in \cite{Mar}, we now formally define Frobenius--Schur 
indicators for the combinatorially introduced unipotent characters of 
dihedral groups with non-trivial automorphism to obtain an analogue of 
Theorem~\ref{thm:main} in this case and show a unicity statement.

For this recall that there is a way to attach a set
$\Uch(\tw2I_2(m))$ of combinatorial objects, called ``unipotent characters'',
to the dihedral groups $I_2(m)$ ($m\ge3$) with non-trivial Coxeter
automorphism of order~2 (see \cite{exot}), such that each
$\chi\in\Uch(\tw2I_2(m))$ has a degree $\chi(1)\in\CC[q]$, and a Frobenius
eigenvalue $\Fr(\chi)$ (a root of unity). Following \cite{Mar} in the
untwisted case, we propose to introduce Frobenius-Schur indicators
$\nu(\chi)$ satisfying the following properties:
\begin{itemize}
\item[(1)] $\nu(\chi)\in\{0,\pm1\}$,
\item[(2)] $\nu(\chi)=0$ if and only if the Frobenius eigenvalue 
$\Fr(\chi)$ is non-real.
\end{itemize}
The set $\Uch(\tw2I_2(m))$ is subdivided into three families, two with one
element and one containing all the other characters; see \cite[6.1]{GM} where
one can also find the corresponding Fourier matrix. The characters in the
$1$-element families have Frobenius eigenvalue~$1$ and we set $\nu(\chi)=1$
for these. \par
The unipotent characters in the big family are parametrized by pairs $(k,l)$
of odd integers with $0<k<l<k+l<2m$, with corresponding Frobenius eigenvalue
given by $\zeta^{kl}$, where $\zeta$ is a $2m$th root of unity.
First assume that $m$ is even. Then $\zeta^{kl}$ is never real, so by~(2) above
we necessarily have $\nu(\chi)=0$ for all unipotent characters in this big
family. Now assume that $m$ is odd.
Then certainly $\zeta^{kl}$ is real when $l=m$. We propose to define the
Frobenius--Schur indicator $\nu(\chi)$ in this case to be $+1$ if $l=m$,
and~$0$ otherwise. With this, we have the following extension of
\cite[Thm.~1.2]{Mar}:

\begin{thm}   \label{thm:cox}
 Let $W=I_2(m)$ and $\sg$ the non-trivial Coxeter automorphism. Then the
 vector $(\nu(\chi))$ of Frobenius--Schur indicators of unipotent
 characters defined above is mapped under Fourier transform and restriction to
 the image of $\Irr(W)^\sg$ in $\Lambda(W,\sg)$ onto the vector of
 multiplicities in the decomposition of $\tilde{\rho}$ as a class function
 on $W.\sg$. \par
 Moreover, for any finite Coxeter group with non-trivial automorphism, the
 vector of Frobenius--Schur indicators is the only $\{1,-1, 0\}$-vector
 with this property and satisfying~(1), (2) above.
\end{thm}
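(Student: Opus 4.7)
The plan is to handle the two claims separately. For the first, I would compute directly, using the explicit Fourier matrix for $\tw2I_2(m)$ given in \cite[6.1]{GM}, the Fourier transform of the proposed indicator vector $\nu$ and compare with the multiplicities already obtained in Propositions~\ref{prop:I2modd} and~\ref{prop:I2m}. When $m$ is even, $\nu$ is supported on the two singleton families with value $+1$, and a short calculation confirms that its Fourier image, restricted to the image of $\Irr(W)^\sg$ in $\Lambda(W,\sg)$, agrees with $\thi_{1,0}+\thi_{1,m}$. When $m$ is odd, $\nu$ is supported on the two singletons together with the $(m-1)/2$ pairs $(k,m)$ in the big family; a direct computation using the root-of-unity entries of the Fourier matrix reproduces the alternating-sign pattern $(1,-1,-1,\ldots,-1)$ of Proposition~\ref{prop:I2modd}.

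For the uniqueness claim, observe that condition~(2) pins down the support of any admissible $\nu$ to be the set $\bar{X}_{\mathbb{R}}$ of unipotent characters whose Frobenius eigenvalue is real. Writing $\Lambda_0\subseteq\Lambda(W,\sg)$ for the image of $\Irr(W)^\sg$, the uniqueness of the $\{\pm 1\}$-sign pattern on $\bar{X}_{\mathbb{R}}$ is equivalent to the statement that the restricted Fourier map
\[ \mathcal{F}_0\colon \CC^{\bar{X}_{\mathbb{R}}}\longrightarrow \CC^{\Lambda_0} \]
has no non-trivial $\{-1,0,1\}$-vector in its kernel; equivalently, no two distinct $\{\pm 1\}$-patterns on $\bar{X}_{\mathbb{R}}$ share the same image under $\mathcal{F}_0$.

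I would verify this condition case by case for each irreducible pair $(W,\sg)$, drawing on the explicit Fourier matrices collected in Section~\ref{sec:FS} and the real-Frobenius-eigenvalue data from \cite{Lu02}, \cite{Ge03} and \cite{GM}. In classical types and in most exceptional cases, $\mathcal{F}_0$ is block diagonal by families, and within each block the Hadamard-type matrix associated with the finite $2$-group attached to the family rules out non-trivial $\{-1,0,1\}$-kernel elements. The dihedral case $I_2(m)$ is covered by the first part of the theorem. For reducible finite Coxeter groups with a non-trivial automorphism, a direct reduction argument in the spirit of the proof of Corollary~\ref{cor1} reduces the uniqueness to the irreducible factors.

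The main obstacle, and where the argument stops being routine, is the $13$-element family of $\tw2F_4$, where the Fourier matrix has intricate entries with a substantial mix of real and non-real Frobenius eigenvalues. Here I expect no uniform conceptual argument and would fall back on explicit, possibly \textsf{CHEVIE}-assisted, verification that $\mathcal{F}_0$ has no non-trivial $\{-1,0,1\}$-kernel element; a secondary but milder obstacle is to make sure that the reducible-case reduction interacts correctly with the choice of extensions, but this follows the same pattern as in Corollary~\ref{cor1}.
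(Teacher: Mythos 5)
The first half of your plan is essentially the paper's argument: the singleton families are trivial, the big family contributes nothing when $m$ is even, and for $m$ odd the big-family computation you propose to do directly from the matrix in \cite[6.1]{GM} is exactly the computation the paper obtains for free by Ennola duality from Marberg's untwisted case; the only bookkeeping point is that the sign pattern $(1,-1,\ldots,-1)$ of Proposition~\ref{prop:I2modd} refers to the extensions positive on $w_0\sg$, which need not be the extensions implicit in the Fourier matrix, so the comparison must be made with matching conventions (as stressed at the start of the proof of Theorem~\ref{thm:main}).

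The uniqueness part, however, has a genuine gap. You replace the assertion ``the multiplicity vector has a unique admissible preimage'' by the stronger condition that the restricted transform kills no nonzero $\{-1,0,1\}$-vector (equivalently, that no two distinct $\{\pm1\}$-patterns on $\bar{X}_{\mathbb{R}}$ share an image), and you claim this stronger condition is enforced by the Hadamard-type family blocks in the classical types. In fact it fails exactly there. Take a $\sg$-stable family of $\tw2D_n$ whose attached $2$-group is non-trivial: all Frobenius eigenvalues in it are $\pm1$, so the whole family lies in $\bar{X}_{\mathbb{R}}$; the family contains strictly more unipotent characters than images of $\sg$-stable characters of $W$ (already $4$ versus $3$ in the smallest case); and by \cite[4.18]{LuB} the full family block is, up to a scalar, a sign matrix with pairwise orthogonal columns. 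Hence the kernel of the restriction to the columns indexed by $\Irr(W)^\sg$ is spanned by the omitted columns, which are scalar multiples of $\{\pm1\}$-vectors; extending such a vector $u$ by zero and comparing the patterns $v$ (equal to $u$ on the family, $+1$ elsewhere) and $v-2u$ gives two distinct admissible patterns with the same image. So your criterion is false, and your case-by-case check would break down precisely in type $\tw2D_n$, which you list among the routine cases, rather than only in the $13$-element family of $\tw2F_4$. The theorem survives because uniqueness concerns only the fibre over the actual multiplicity vector: on such a family the true indicator vector is the all-ones vector (Lusztig), any other admissible preimage would differ from it by a kernel vector with entries in $\{0,-2\}$, and pairing that kernel vector with the everywhere-positive column attached to the special character gives a strictly negative value, a contradiction. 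This value-dependent argument is in substance Marberg's, which the paper simply cites for $\tw2D_n$, with computer verification for $\tw2E_6$, $\tw3D_4$ and $\tw2F_4$ and the combinatorial argument of \cite[p.~27]{Mar} for the dihedral case. Finally, your reduction of the reducible case ``as in Corollary~\ref{cor1}'' is not automatic either: for a $\sg$-stable direct product, real Frobenius eigenvalues of an exterior product need not arise from real eigenvalues of the factors, so the uniqueness statement does not formally factor; only the case of factors permuted cyclically by $\sg$ reduces directly.
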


\begin{proof}
For the 1-element families, the statement is trivially true, since the
corresponding Fourier matrix is the identity matrix and the multiplicities
in $\tilde{\rho}$ all equal~1 by Propositions~\ref{prop:I2modd}
and~\ref{prop:I2m}. When $m$ is odd, note that the data for $\tw2I_2(m)$ are
Ennola dual to those for $I_2(m)$, that is, the Fourier matrix is identical in
both cases, and the rows and columns are labelled by the same combinatorial
objects. The claim then follows by the computation in \cite[p.~27]{Mar}.
Finally, when $m$ is even, the first claim is obvious, since both sides are
the all $0$-vector. Unicity follows by a combinatorial argument as in
\cite[p.~27]{Mar}. \par
For the Weyl groups with non-trivial automorphism, the unicity is clear in
the case of 1-element families. For type $\tw2D_n$, uniqueness follows
exactly as in \cite{Mar}, and for the three exceptional types $\tw2E_6$,
$\tw3D_4$ and $\tw2F_4$, it is easily checked by computer.
\end{proof}


\end{document}